\documentclass[11pt]{amsart}%
\usepackage{amsmath}%
\usepackage{amssymb}%
\usepackage{amscd}%
\usepackage{color}%
\usepackage{mathrsfs}%
\usepackage[all]{xy}%
\def\ol#1{\overline{#1}}
\def\wh#1{\widehat{#1}}
\def\wt#1{\widetilde{#1}}
\theoremstyle{plain}
    \newtheorem{theorem}{Theorem}[section]
    
    \newtheorem{proposition}[theorem]{Proposition}
    \newtheorem{lemma}[theorem]{Lemma}
    
\theoremstyle{definition}
    \newtheorem{definition}[theorem]{Definition}
    
    \newtheorem{example}[theorem]{Example}
    \newtheorem{remark}[theorem]{Remark}
\def\Alphabet{A,B,C,D,E,F,G,H,I,J,K,L,M,N,O,P,Q,R,S,T,U,V,W,X,Y,Z}
\def\alphabet{a,b,c,d,e,f,g,h,i,j,k,l,m,n,o,p,q,r,s,t,u,v,w,x,y,z}
\def\endpiece{xxx}
\def\makeAlphabet[#1]{\expandafter\makeA#1,xxx,}
\def\makealphabet[#1]{\expandafter\makea#1,xxx,}
\def\makeA#1,{\def\temp{#1}\ifx\temp\endpiece\else%
\mkbb{#1}\mkfrak{#1}\mkbf{#1}\mkcal{#1}\mkscr{#1}\expandafter\makeA\fi}%
\def\makea#1,{\def\temp{#1}\ifx\temp\endpiece\else\mkfrak{#1}\mkbf{#1}\expandafter\makea\fi}%
\def\mkbb#1{\expandafter\def\csname bb#1\endcsname{\mathbb{#1}}}
\def\mkfrak#1{\expandafter\def\csname fr#1\endcsname{\mathfrak{#1}}}
\def\mkbf#1{\expandafter\def\csname b#1\endcsname{\mathbf{#1}}}
\def\mkcal#1{\expandafter\def\csname c#1\endcsname{\mathcal{#1}}}
\def\mkscr#1{\expandafter\def\csname s#1\endcsname{\mathscr{#1}}}
\def\makeop[#1]{\xmakeop#1,xxx,}
\def\mkop#1{\expandafter\def\csname #1\endcsname{{\mathrm{#1}}}} %
\def\xmakeop#1,{\def\temp{#1}\ifx\temp\endpiece\else\mkop{#1}\expandafter\xmakeop\fi}%
\makeAlphabet[\Alphabet]
\makealphabet[\alphabet]
\makeop[Alt,End,Ext,Hom,Sym,Tor]
\makeop[CH,Pic,div]
\makeop[Ker,Im,Coker,Coim,id,pr]
\makeop[Spec,Spf,Spm,Isom]
\makeop[Re,Im]%
\makeop[dR,Nis,crys,rig,syn,tor,Cont]
\makeop[Gal,Res,ab,res]
\makeop[pol]
\makeop[Var,an,Isoc,univ,Cusp]%
\makeop[Lie,coLie,MIC,DR,Gr,Ab,Cone]
\makeop[Meas]
\makeop[Gl,Sl]
\makeop[Eis,tr,Fil,gr,et]

\def\pair#1{\left\langle #1 \right\rangle}

\def\Isocda{\Isoc^{\kern-0.5mm\dagger}}

\def\Exp#1{\exp\left[#1\right]}%
\def\pair#1{\langle#1\rangle}%
\def\markout#1{}%


\begin{document}
\title{Algebraic theta functions and Eisenstein-Kronecker numbers}
\author{Kenichi Bannai and Shinichi Kobayashi}
\address{Graduate school of Mathematics, Nagoya University, Furo-cho Chikusa-ku, Nagoya, Japan 464-8602}
\date{September 11, 2007}
\begin{abstract}
	In this paper, we give an overview of our previous paper concerning the investigation of 
	the algebraic and $p$-adic properties of Eisenstein-Kronecker numbers using Mumford's
	theory of algebraic theta functions.
\end{abstract}
\thanks{Both authors were supported in part by the JSPS postdoctoral fellowship for research abroad.}
\maketitle

%
%
%
\section{Introduction}
%
%
%

In the paper \cite{BK1}, we used Mumford's theory of algebraic theta functions to study the algebraic and $p$-adic 
properties of Eisenstein-Kronecker numbers, which are analogues in the case of imaginary quadratic fields
of the classical generalized Bernoulli numbers, or more precisely, special values of Hurwitz zeta funtions.
The purpose of this paper is to give an overview of the main 
arguments of \cite{BK1}, highlighting the main ideas.  
We will first prove that the generating function for Eisenstein-Kronecker numbers is in fact given by
a theta function which we call the Kronecker theta function.  This theta function differs from the two-variable Jacobi
theta function studied by Zagier \cite{Zag} by a simple exponential factor.
Then we will show how to use Mumford's theory of 
algebraic theta functions to 
study the algebraic and $p$-adic properties of this generating function.   Using this result, when $p$ is an ordinary 
prime, we will
construct a $p$-adic measure interpolating Eisenstein-Kronecker numbers.   We will then use this measure to 
construct the two-variable $p$-adic measure, constructed by Yager \cite{Yag}, interpolating special
values of Hecke $L$-functions.  Our construction may also be used to construct the two-variable $p$-adic measure 
originally constructed by Manin-Vishik \cite{MV} and Katz \cite{Ka}.
We refer the reader to \cite{BK1} for details.  

The detailed content of this paper is as follows.  We first review the cyclotomic case
to demonstrate the type of result we are aiming to prove.
Let $\Gamma := 2 \pi i \bbZ$.   The cyclotomic analogue of the Eisenstein-Kronecker number is the following.

\begin{definition}
	For any $z_0 \in \bbC$ and integer $b > 0$, we let
	$$
		e_b^*(z_0) :=  {\sum_{n \in\Gamma}}^*  \frac{1}{(z_0 + n)^b},
	$$
	where $\sum^*$ means the sum taken over all $n \in \Gamma$ other than $-z_0$ if $z_0 \in \Gamma$.
\end{definition}
The arithmetic importance of $e_b^*(z_0)$ is its relation to critical $L$-values of Dirichlet characters.
Let $\frf = (f)$ be an ideal in $\bbZ$ and suppose $\chi$ is a Dirichlet character of conductor $\frf$ such that 
$\chi(-\alpha) = (-1)^b \chi(\alpha)$.
Then we have by definition
\begin{equation}\label{eq: Dirichlet}
	L_\frf(\chi, b) = \frac{(2 \pi i)^b}{2f^b} \sum_{\alpha \in (\bbZ/\frf)^\times} \chi(\alpha) e_b^*(2 \pi i \alpha/f)
\end{equation}
for $b > 0$.  We define a function $g(z)$ on $\bbC/ \Gamma$ by
$$
	g(z) := \frac{\exp(z)}{\exp(z)-1} - \frac{1}{2}
$$
(we subtract $\frac{1}{2}$ so that $g(z)$ becomes an odd function satisfying $g(-z) = - g(z)$)
and for any $z_0 \in \bbC$, we let $g_{z_0}(z) := g(z + z_0)$.  Then $g_{z_0}(z)$ is
a generating function 
\begin{equation}\label{eq: gen one}
	g_{z_0}(z) = \delta_{z_0} z^{-1} +  \sum_{b > 0} (-1)^{b-1} e_b^*(z_0) z^{b-1}
\end{equation}
of $e_{b}^*(z_0)$ for $b > 0$, where $\delta_x=1$ if $x \in \Gamma$ and is zero otherwise.
We fix an embedding $i : \ol\bbQ \hookrightarrow \bbC$.
If $z_0 \in \Gamma \otimes \bbQ$, then the Laurent expansion of $g_{z_0}(z)$ has coefficients in 
$\ol\bbQ$. Hence we have
$
	e^*_b(z_0) \in \ol \bbQ.
$
The mechanism behind this proof is the following.  $\bbC/\Gamma$ has an algebraic model 
given by the multiplicative group
$\bbG_m$ defined over $\bbQ$, with a uniformization $\xi: \bbC/\Gamma \cong \bbG_m(\bbC)$ 
given by $z \mapsto \exp(z)$.  Then $g(z)$ corresponds to the rational function $t/(t-1) - \frac{1}{2}$ 
on $\bbG_m$ defined over $\bbQ$, and the algebraicity is a consequence of this fact.

Using this generating function, we can also construct a $p$-adic measure interpolating 
the values $e_{b}^*(z_0)$.
Throughout this paper, we fix an embedding
$i_p : \ol\bbQ \hookrightarrow \bbC_p$, and denote by $W$ the
ring of integers of the completion of the maximal unramified extension of $\bbQ_p$ in $\bbC_p$.
If $z_0$ is a torsion point $\not=1$ in $\bbG_m(\bbC)$ of order prime to $p$, then $g_{z_0}(z)$ corresponds through 
$\xi$ to a rational function defined over $W$, and we have
$$
	\wh g_{z_0}(T) := g_{z_0}(z)|_{z = \log(1+T)} \in W[[T]],
$$
where $T$ is the formal parameter of the formal group $\wh \bbG_m$, with relation $T = t-1$.
Using the standard dictionary between $p$-adic measures on $\bbZ_p$ and power series in $W[[T]]$
(see for example \cite{Hid}), we define
the measure $\mu_{z_0}$ to be the measure  on $\bbZ_p$ corresponding to the power series $\wh g_{z_0}(T)$.
Since $\partial_{\log, T} := (1+T) \partial_T = \partial_z$, this measure satisfies the interpolation property
\begin{equation}\label{eq: interpolate one}
	\int_{\bbZ_p} x^{b-1} d \mu_{z_0}(x) = (-1)^{b-1} (b-1)! e_b^*(z_0)
\end{equation}
for $b > 1$.  This measure may be used to construct the $p$-adic $L$-function interpolating 
critical $L$-values of Dirichlet characters.

The goal of \cite{BK1}, and also of this paper, is to use similar ideas to study Eisenstein-Kronecker numbers.  
Let $K$ be an imaginary quadratic  field with ring of integers $\cO_K$.  We assume in this paper for simplicity 
that the class number of $K$ is one.   See \cite{BK1} for the general case.
Fix an ideal $\frf \subset \cO_K$.  Then there exists by the theory of 
complex multiplication some complex period 
$\Omega \in \bbC$ such that for the lattice $\Gamma: = \Omega \frf$, the complex torus $\bbC/ \Gamma$ has an 
algebraic model $E$ defined over $K$, with standard uniformization $\bbC/\Gamma \cong E(\bbC)$.
Let $A$ be the area of the fundamental domain of $\Gamma$ divided by $\pi = 3.1415\cdots$, 
and $\pair{z,w} = \exp((z \ol w - w \ol z) /A)$, where $\ol z$ and $\ol w$ are the complex conjugates of $z$ and $w$.

\begin{definition}
	For $z_0$, $w_0 \in \bbC$ and integers $b > a+2$,  we define the Eisenstein-Kronecker number
	$e^*_{a,b}(z_0, w_0)$ to be the sum
	$$
		e^*_{a,b}(z_0, w_0) := {\sum_{\gamma \in \Gamma}}^* \frac{(\ol z_0 + \ol\gamma)^a}{(z_0 + \gamma)^b}
		\pair{\gamma, w_0},
	$$	
	where ${\sum}^*$ means the sum taken over all $\gamma \in \Gamma$ other than $ -z_0$ if $z_0 \in \Gamma$.
	We may extend the definition to $a \geq 0$, $b > 0$ using analytic continuation (See Definition \ref{def: EK}).
\end{definition}

The arithmetic importance of $e^*_{a,b}(z_0,w_0)$ is its relation to critical $L$-values of Hecke characters on $K$
(see \eqref{eq: hecke}.)   The observation for our research is the fact that the 
generating function, which plays the role of $g(z)$ above, is given by the Kronecker theta function $\Theta(z,w)$.
Since this function is not a rational function,  the translation $\Theta_{z_0, w_0}(z,w)$ of this function
is taken using the theory of algebraic theta functions of Mumford, which uses extra exponential factors
to preserve the algebraicity.  As an analogy of \eqref{eq: gen one}, we have
\begin{multline}\label{eq: gen two}
	\Theta_{z_0, w_0}(z,w) = \pair{w_0, z_0}  \delta_{z_0} z^{-1} +  \delta_{w_0} w^{-1}\\
	+ \sum_{a, b\geq 0} (-1)^{a+b} \frac{e_{a,b+1}^*(z_0, w_0)}{A^a a!} z^{b} w^{a}.
\end{multline}
We will  systematically use Mumford's theory to study the algebraic and $p$-adic properties of this function.
In particular, we prove that $e^*_{a,b}(z_0, w_0)$ is algebraic if $z_0$, $w_0 \in \Gamma\otimes\bbQ$.
In addition, when $p \geq 5$ is ordinary (i.e. if $p$ splits as $p = \frp\frp^*$ in $\cO_K$), 
we will prove a $p$-integrality statement
for $\Theta_{z_0,w_0}(z,w)$.   We will then use this result to construct a $p$-adic measure analogous to 
\eqref{eq: interpolate one} interpolating the values $e^*_{a,b}(z_0, w_0)$.  
At the end, we will use this measure to construct Yager's measure interpolating critical values of
Hecke $L$-functions.

Construction of a two-variable distribution when $p$ is supersingular interpolating Eisenstein-Kronecker numbers 
will be given in a subsequent paper \cite{BK2}.  The construction of the two-variable $p$-adic distribution 
as in \cite{BK1} and \cite{BK2} play an important role in explicit calculation of the $p$-adic elliptic polylogarithm in \cite{BKT}, 
extending previous results of \cite{Ba} to two variables.\\

The authors would like to thank the organizers Ki-ichiro Hashimoto, Yukiyoshi Nakkajima and  Hiroshi Tsunogai for
the opportunity to give a talk at this conference.  The authors would also like to thank the referee for carefully reading the
manuscript and giving appropriate comments.  Part of this research was conducted while the first author was visiting the 
\'Ecole Normale Sup\'erieure at Paris, and the second author Institut de Math\'ematiques de Jussieu.  The authors would like to thank their
hosts Yves Andr\'e and Pierre Colmez for hospitality. 

%
%
%
\section{Kronecker theta function}
%
%
%

In this section, we first define and investigate the properties of Eisenstein-Kronecker-Lerch series and 
Eisenstein-Kronecker numbers.  Then, after reviewing the theory of line bundles and theta functions on a general complex
torus, we prove that the Kronecker theta function $\Theta(z,w)$ is a generating function 
of Eisenstein-Kronecker numbers.  In this section, we let $\Gamma$ be a general lattice $\bbC$.

\begin{definition}(\cite{W} VIII \S 12)\label{def: EKL} 
 Let $a$ be an integer $\geq 0$.  
 For $z$, $w \in \bbC$,  we define the 
  \textit{Eisenstein-Kronecker-Lerch function} $K_a(z,w,s)$ by
  \begin{equation} \label{equation: definition of Eisenstein-Kronecker}
     K_a(z,w,s; \Gamma) = {\sum}^*_{\gamma \in \Gamma} 
    \frac{(\ol{z} + \ol{\gamma})^a}{|z + \gamma|^{2s}} \pair{\gamma, w} \quad    (\Re\,s > a/2 + 1),
  \end{equation}
   where $\sum^*$  means the sum taken over all $\gamma \in \Gamma$ other than $-z$
   if $z$ is in $\Gamma$.
 \end{definition}

We omit $\Gamma$ from the notation if there is no fear of confusion.
The function $K_a(z, w,s)$ may be continued  analytically in $s$ as follows.

\begin{proposition}\label{prop: property K}
	Let $a$ be an integer $\geq 0$.
	The function $K_a(z,w,s)$ continues meromorphically to a function on the whole $s$-plane, 
	with possible poles only at  $s=0$  $($if $a=0$, $z \in \Gamma${}$)$ and at 
     	$s=1$ $($if $a=0$, $w \in \Gamma${}$)$.
\end{proposition}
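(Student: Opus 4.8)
The plan is to realize $K_a(z,w,s)$ as the Mellin transform of a Gaussian theta series on $\Gamma$ and to invoke the theta inversion formula, i.e.\ Poisson summation over $\Gamma$. For $\Re s > a/2+1$ one substitutes $|z+\gamma|^{-2s} = \Gamma(s)^{-1}\int_0^\infty t^{s-1}e^{-t|z+\gamma|^{2}}\,dt$, multiplies by $(\ol z+\ol\gamma)^{a}\pair{\gamma,w}$, sums over $\gamma$, and interchanges sum and integral (legitimate by absolute convergence in that half-plane) to get
\[\Gamma(s)\,K_a(z,w,s) = \int_0^\infty t^{s-1}\vartheta^{*}(t)\,dt,\qquad \vartheta^{*}(t):={\sum_{\gamma\in\Gamma}}^{*}(\ol z+\ol\gamma)^{a}e^{-t|z+\gamma|^{2}}\pair{\gamma,w},\]
where the star deletes the term $\gamma=-z$ when $z\in\Gamma$ (that term is nonzero only for $a=0$, since otherwise it carries the vanishing factor $(\ol z+\ol\gamma)^{a}$). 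Split $\int_0^\infty=\int_1^\infty+\int_0^1$. The tail $\int_1^\infty t^{s-1}\vartheta^{*}(t)\,dt$ is entire in $s$, because $\vartheta^{*}(t)$ decays exponentially as $t\to\infty$ at a rate governed by the smallest $|z+\gamma|$ among the surviving terms.

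For $\int_0^1 t^{s-1}\vartheta^{*}(t)\,dt$ I would first pull off the constant $c_0$ contributed by the deleted $\gamma=-z$ term (present only if $a=0$ and $z\in\Gamma$), which yields a summand $-c_0/s$, and then substitute $t\mapsto 1/t$ in the remaining $\int_0^1 t^{s-1}\vartheta(t)\,dt$, where $\vartheta(t)=\sum_{\gamma\in\Gamma}(\ol z+\ol\gamma)^{a}e^{-t|z+\gamma|^{2}}\pair{\gamma,w}$. Poisson summation over $\Gamma$ now applies: the Fourier transform on $\bbC\cong\bbR^{2}$ of the Gaussian times polynomial $(\ol z+\ol x)^{a}e^{-t|z+x|^{2}}$ is, by Hecke's formula (after translating by $z$), again a Gaussian times a polynomial of degree $a$, and $\pair{\cdot,w}$ is a unitary character of $\Gamma$ whose effect under Fourier transform is to translate the dual lattice by the point attached to $w$. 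This rewrites $\vartheta(1/t)$ as $c\,t^{1+a}\,\vartheta^{\vee}(t)$ for an explicit constant $c$ (depending only on the covolume of $\Gamma$ and on $z,w$), where $\vartheta^{\vee}$ is a theta series of the same shape but with the roles of $z$ and $w$ exchanged — this is the Kronecker symmetry. Isolating the lowest-frequency term $c_1$ of $\vartheta^{\vee}$, which survives only for $a=0$ and $w\in\Gamma$ (for $a\geq 1$ the accompanying polynomial factor kills it), one finds that $\int_0^1 t^{s-1}\vartheta(t)\,dt$ equals an entire function plus $c\,c_1/(s-a-1)$. Assembling everything, $\Gamma(s)\,K_a(z,w,s)$ continues meromorphically to $\bbC$ with at most simple poles at $s=0$ (possible only if $a=0$, $z\in\Gamma$) and at $s=a+1$, i.e.\ $s=1$ since $c_1\neq 0$ forces $a=0$ (possible only if $a=0$, $w\in\Gamma$). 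Dividing by $\Gamma(s)$, whose reciprocal is entire, gives the claimed meromorphic continuation of $K_a(z,w,s)$ itself, with poles confined to $\{0,1\}$ and occurring only in the two listed cases; in particular $K_a(z,w,s)$ is entire whenever $a\geq 1$.

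The interchange of sum and integral, the decay estimates, and the splitting are routine. The step requiring real work — and the one I expect to be the main obstacle — is the theta inversion formula: carrying out the two-dimensional Fourier-transform computation for $(\ol z+\ol x)^{a}e^{-t|z+x|^{2}}\pair{x,w}$ and reading off precisely the power $t^{1+a}$, the constant $c$, and the exact dual theta series $\vartheta^{\vee}$ (including the $z\leftrightarrow w$ symmetry, the translation of the dual lattice induced by $\pair{\cdot,w}$, and the vanishing of the dual constant term for $a\geq 1$). Keeping the normalizing constants — the area constant $A$ and the covolume of $\Gamma$ — consistent throughout is the part most likely to cause bookkeeping trouble; the rest is formal.
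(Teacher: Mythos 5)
Your argument is correct and is essentially the proof the paper points to: the paper's ``proof'' is a citation of Weil, \emph{Elliptic Functions according to Eisenstein and Kronecker}, VIII \S 13, where the continuation is obtained by exactly this Hecke-style device --- writing $\Gamma(s)K_a(z,w,s)$ as the Mellin transform of a Gaussian theta series, splitting the integral at $t=1$, and applying Poisson summation (theta inversion) to the piece near $t=0$, with the two candidate poles coming from the deleted term and from the constant term of the dual theta series. One small remark: since $1/\Gamma(s)$ vanishes at $s=0$, your term $-c_0/s$ in $\Gamma(s)K_a$ contributes no pole of $K_a$ itself at $s=0$, which is consistent with (indeed slightly stronger than) the proposition, since the latter only asserts where poles \emph{may} occur.
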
	

\begin{proof}
	See \cite{W} VIII \S13 for the proof.
\end{proof}

From the definition of Eisenstein-Kronecker-Lerch series, we have the following differential equations.

\begin{lemma} \label{lem: diff}
   	Let $a$ be an integer $>0$.  For $z$, $w \in (\mathbb{C} \setminus \Gamma)$,
   	the function $K_a(z,w,s)$ as a real analytic function in $z$ and $w$ satisfies the differential equations
   	\begin{align*} 
         		\partial_{z} K_a(z,w,s) &= -s K_{a+1}(z,w,s+1) \\
           	\partial_{\ol{z}}K_a(z,w,s) &= (a-s)K_{a-1}(z,w,s) \\
           	\partial_{w} K_a(z,w,s) &= (\ol{z} K_{a}(z,w,s) - K_{a+1}(z,w,s) ) /A\\
           	\partial_{\ol{w}}K_a(z,w,s) &= (K_{a-1}(z,w,s-1) - z K_{a}(z,w,s) ) /A.
	\end{align*}
\end{lemma}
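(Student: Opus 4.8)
\medskip
\noindent\emph{Proof idea.}\ The plan is to obtain all four identities by differentiating the defining series \eqref{equation: definition of Eisenstein-Kronecker} of $K_a(z,w,s)$ term by term on the half-plane of absolute convergence $\Re s>a/2+1$ (on which, since $z,w\notin\Gamma$, the starred sum is an ordinary sum over $\Gamma$), and then to extend the resulting formulas to the whole $s$-plane by Proposition \ref{prop: property K}. Throughout I would use Wirtinger derivatives, treating $z,\ol z$ and $w,\ol w$ as independent, write the general term of $K_a(z,w,s)$ as $(\ol z+\ol\gamma)^a\,|z+\gamma|^{-2s}\,\pair{\gamma,w}$ with $|z+\gamma|^{2s}=\exp\big(s\log((z+\gamma)(\ol z+\ol\gamma))\big)$, and begin by recording the derivatives of the two non-holomorphic factors:
\begin{gather*}
 \partial_z|z+\gamma|^{-2s}=\frac{-s}{z+\gamma}\,|z+\gamma|^{-2s},\qquad
 \partial_{\ol z}|z+\gamma|^{-2s}=\frac{-s}{\ol z+\ol\gamma}\,|z+\gamma|^{-2s},\\
 \partial_w\pair{\gamma,w}=\frac{-\ol\gamma}{A}\,\pair{\gamma,w},\qquad
 \partial_{\ol w}\pair{\gamma,w}=\frac{\gamma}{A}\,\pair{\gamma,w}.
\end{gather*}

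Granting this, the term-by-term computation is short, and I would carry it out as follows. For $\partial_z$ the differentiated term is $-s(\ol z+\ol\gamma)^a(z+\gamma)^{-1}|z+\gamma|^{-2s}\pair{\gamma,w}$, and since $(z+\gamma)^{-1}=(\ol z+\ol\gamma)\,|z+\gamma|^{-2}$ this equals $-s(\ol z+\ol\gamma)^{a+1}|z+\gamma|^{-2(s+1)}\pair{\gamma,w}$, i.e.\ $-s$ times the general term of $K_{a+1}(z,w,s+1)$. For $\partial_{\ol z}$ the product rule produces $a(\ol z+\ol\gamma)^{a-1}|z+\gamma|^{-2s}\pair{\gamma,w}$ from the numerator and $-s(\ol z+\ol\gamma)^{a-1}|z+\gamma|^{-2s}\pair{\gamma,w}$ from $|z+\gamma|^{-2s}$, whose sum over $\gamma$ is $(a-s)K_{a-1}(z,w,s)$; this is where $a>0$ is used, so that $a-1\ge 0$. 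For the $w$-derivatives I would use the elementary splittings $-\ol\gamma=\ol z-(\ol z+\ol\gamma)$ and $\gamma=(z+\gamma)-z$: the first turns $(\ol z+\ol\gamma)^a(-\ol\gamma)$ into $\ol z(\ol z+\ol\gamma)^a-(\ol z+\ol\gamma)^{a+1}$, so that summing and dividing by $A$ gives $(\ol z\,K_a(z,w,s)-K_{a+1}(z,w,s))/A$; the second, combined with $(z+\gamma)|z+\gamma|^{-2s}=(\ol z+\ol\gamma)^{-1}|z+\gamma|^{-2(s-1)}$, turns $(\ol z+\ol\gamma)^a\,\gamma\,|z+\gamma|^{-2s}$ into $(\ol z+\ol\gamma)^{a-1}|z+\gamma|^{-2(s-1)}-z(\ol z+\ol\gamma)^a|z+\gamma|^{-2s}$, which sums to $(K_{a-1}(z,w,s-1)-z\,K_a(z,w,s))/A$. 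This produces precisely the four displayed equations.

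The two steps that actually require justification — and where the real work lies — are the interchange of $\sum$ and $\partial$ and the passage from large $\Re s$ to arbitrary $s$. For the first, since $|\pair{\gamma,w}|=1$, each differentiated series differs from an Eisenstein series $\sum_\gamma|z+\gamma|^{-2\sigma}$ only by bounded factors $|z+\gamma|^{\pm1}$, hence converges uniformly on compact subsets of $\{\Re s>a/2+1\}\times(\bbC\setminus\Gamma)^2$ — possibly after passing to a smaller half-plane, as the series $K_{a+1}(z,w,s)$ and $K_{a-1}(z,w,s-1)$ on the right-hand sides require $\Re s>(a+3)/2$ — so term-by-term differentiation is legitimate there and the four identities hold for such $s$. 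For the second, I would fix $z,w\in\bbC\setminus\Gamma$ and apply Proposition \ref{prop: property K}: each of the functions occurring continues meromorphically in $s$ to the whole plane, and since the continued $K_a(z,w,s)$ remains real-analytic in $z$ and $w$, its $z$- and $w$-derivatives are again meromorphic in $s$; as the two sides of each identity then agree on a non-empty open subset of the $s$-plane, they coincide identically. The only genuine obstacle in the argument is thus the bookkeeping — getting the Wirtinger derivatives of $|z+\gamma|^{2s}$ and $\pair{\gamma,w}$ correct and tracking the shifts of the indices $a$ and $s$; once that is organized, nothing deep remains.
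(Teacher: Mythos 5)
Your proposal is correct and is exactly the argument the paper intends: the lemma is stated as following "from the definition" by term-by-term Wirtinger differentiation of the defining series on its half-plane of convergence, followed by analytic continuation in $s$, and all four of your computations (including the index shifts and the splittings $-\ol\gamma=\ol z-(\ol z+\ol\gamma)$, $\gamma=(z+\gamma)-z$) check out. Your remark that the right-hand sides $K_{a+1}(z,w,s)$ and $K_{a-1}(z,w,s-1)$ force a slightly smaller half-plane $\Re s>(a+3)/2$ before continuing is a correct and worthwhile refinement of the bookkeeping.
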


Eisenstein-Kronecker numbers are defined as follows.

\begin{definition}[Eisenstein-Kronecker number]\label{def: EK}
	For any $z_0$, $w_0 \in \bbC$ and integers $a \geq 0$, $b > 0$, we define the Eisenstein-Kronecker number
	 $e^*_{a,b}(z_0, w_0)$ by the formula
	$$
		e^*_{a,b}(z_0, w_0) := K_{a+b}(z_0, w_0, b).
	$$
\end{definition}

We next  review the theory of line bundles and theta functions on a general complex torus following Mumford \cite{Mum2}.
Let $V$ be a complex vector space.    For a lattice $\Lambda \subset V$, consider the complex torus 
$\bbT = V/ \Lambda$.   The holomorphic line bundles on  $\bbT$ are classified by the following theorem.

\begin{theorem}[Appell and Humbert]
	The group $\Pic(\bbT)$ of isomorphism classes of holomorphic line bundles on $\bbT$ is isomorphic to the 
	group of pairs $(H, \alpha)$, where
	\begin{enumerate}
		\renewcommand{\theenumi}{\roman{enumi}}
		\renewcommand{\labelenumi}{(\theenumi)}
		\item $H$ is a Hermitian form on $V$.
		\item $E = \Im H$ takes integral values on $\Lambda$.
		\item $\alpha : \Lambda \rightarrow U(1) : = \{ z \in \mathbb{C} \mid |z|=1 \}$ is a map
		such that $\alpha(\gamma+ \gamma') = \exp(\pi i E(\gamma, \gamma')) \alpha(\gamma) \alpha(\gamma')$.
	\end{enumerate}	
\end{theorem}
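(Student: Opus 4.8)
The plan is to exhibit a bijective group homomorphism explicitly, building a holomorphic line bundle out of each pair $(H,\alpha)$ by means of an automorphy factor and then showing this assignment is injective and surjective. Given a pair $(H,\alpha)$ satisfying (i)--(iii), I would form the automorphy factor
$$
	a_{(H,\alpha)}(\gamma, v) := \alpha(\gamma)\,\exp\!\left(\pi H(\gamma,v) + \tfrac{\pi}{2} H(\gamma,\gamma)\right),
	\qquad \gamma\in\Lambda,\ v\in V,
$$
where $H$ is taken linear in its second argument, so that $v\mapsto a_{(H,\alpha)}(\gamma,v)$ is holomorphic, and $H(\gamma,\gamma)\in\bbR$. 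A direct computation using the Hermitian symmetry $H(x,y)=\ol{H(y,x)}$ together with the relation in (iii) (which is designed precisely to produce the cross term $e^{\pi i E(\gamma,\gamma')}$) shows the cocycle identity $a(\gamma+\gamma',v)=a(\gamma,v+\gamma')\,a(\gamma',v)$. Hence $\Lambda$ acts freely on $V\times\bbC$ by $\gamma\cdot(v,t)=(v+\gamma,\,a_{(H,\alpha)}(\gamma,v)t)$, and the quotient $L(H,\alpha)$ is a holomorphic line bundle on $\bbT$. Multiplying cocycles gives $L(H_1,\alpha_1)\otimes L(H_2,\alpha_2)\isom L(H_1+H_2,\alpha_1\alpha_2)$, so $(H,\alpha)\mapsto L(H,\alpha)$ is a homomorphism from the group of pairs into $\Pic(\bbT)$.

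To prove injectivity, suppose $L(H,\alpha)$ is trivial, so $a_{(H,\alpha)}(\gamma,v)=h(v+\gamma)/h(v)$ for some nowhere-vanishing holomorphic $h$ on $V$. Equipping $L(H,\alpha)$ with the metric $\|(v,t)\|=e^{-\frac{\pi}{2}H(v,v)}|t|$ (which one checks descends to $\bbT$), a standard curvature computation identifies the first Chern class of $L(H,\alpha)$ in $H^2(\bbT,\bbZ)\isom\bigwedge^2\Hom(\Lambda,\bbZ)$ with the alternating form $E=\Im H$ on $\Lambda\isom H_1(\bbT,\bbZ)$. Triviality forces $E\equiv 0$, and since a Hermitian form is recovered from its imaginary part via $H(v,w)=E(iv,w)+iE(v,w)$, we get $H=0$. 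Then $a_{(H,\alpha)}(\gamma,v)=\alpha(\gamma)$ is constant in $v$, so $|h|$ is $\Lambda$-periodic, hence bounded on the compact torus $\bbT$, hence $h$ is constant by Liouville and $\alpha\equiv 1$; thus the pair $(H,\alpha)$ is trivial.

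For surjectivity I would use the exponential sequence $0\to\bbZ\to\cO_\bbT\to\cO_\bbT^\times\to 0$, which gives $\Pic(\bbT)=H^1(\bbT,\cO_\bbT^\times)$ together with the first Chern class $c_1\colon\Pic(\bbT)\to H^2(\bbT,\bbZ)$ whose kernel is $\Pic^0(\bbT)=H^1(\bbT,\cO_\bbT)/\operatorname{image}\bigl(H^1(\bbT,\bbZ)\bigr)$. Using $H^\bullet(\bbT,\bbZ)\isom\bigwedge^\bullet\Hom(\Lambda,\bbZ)$ and Dolbeault's theorem, one checks two facts: first, that an integral alternating form $E$ on $\Lambda$ lies in the image of $c_1$ exactly when it maps to zero in $H^2(\bbT,\cO_\bbT)$, i.e.\ exactly when it has Hodge type $(1,1)$, which is exactly when $E=\Im H$ for a unique Hermitian form $H$; second, that $\Pic^0(\bbT)\isom\Hom(\Lambda,\bbR)/\Hom(\Lambda,\bbZ)\isom\Hom(\Lambda,U(1))$, realized by the flat bundles $L(0,\chi)$ for $\chi\in\Hom(\Lambda,U(1))$. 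I also need the elementary lemma that every such $E$ admits at least one semicharacter $\alpha_0$ as in (iii): choosing a $\bbZ$-basis $(\lambda_j)$ of $\Lambda$ and setting $\alpha_0\bigl(\sum n_j\lambda_j\bigr):=\exp\bigl(\pi i\sum_{j<k} n_j n_k E(\lambda_j,\lambda_k)\bigr)$ produces one. Granting these, given a line bundle $L$ I take the Hermitian form $H$ with $\Im H=c_1(L)$ and any semicharacter $\alpha_0$ for $E=\Im H$; then $L\otimes L(H,\alpha_0)^{-1}$ has trivial first Chern class, hence equals $L(0,\chi)$ for some $\chi$, so $L\isom L(H,\alpha_0\chi)$ with $\alpha_0\chi$ again a semicharacter for $E$.

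The main obstacle is the Hodge-theoretic input in the surjectivity step: identifying the image of $c_1$ (the N\'eron--Severi group of $\bbT$) with the imaginary parts of Hermitian forms, and identifying $\Pic^0(\bbT)$ concretely with $\Hom(\Lambda,U(1))$. Both amount to unwinding the long exact sequence of the exponential sequence through the description of $H^\bullet(\bbT,\bbC)$ as the exterior algebra on $V^*\oplus\ol V^*$ and matching the $(1,1)$-component with Hermitian forms; once this is in place, the remaining verifications (the cocycle identity, the Liouville argument, existence of semicharacters, and chasing the isomorphisms) are routine.
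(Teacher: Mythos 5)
The paper gives no proof of this statement: it is quoted as a classical result (Appell--Humbert) in the course of reviewing Mumford's theory of line bundles on complex tori, with \cite{Mum2} as the reference, so there is nothing internal to compare your argument against. Your proposal is the standard proof (essentially the one in Mumford's \emph{Abelian Varieties} or Birkenhake--Lange) and it is correct and complete in outline: the cocycle identity follows, as you say, from $H(\gamma,\gamma')+H(\gamma',\gamma)=2\Re H(\gamma,\gamma')$ combined with condition (iii); your explicit semicharacter $\alpha_0$ works because $n_jm_k+n_km_j\equiv n_jm_k-n_km_j\pmod 2$ and $E(\lambda_j,\lambda_k)\in\bbZ$; and the two Hodge-theoretic inputs you isolate for surjectivity (the image of $c_1$ consists exactly of the integral $(1,1)$ classes, i.e.\ the imaginary parts of Hermitian forms, and $\Pic^0(\bbT)\isom\Hom(\Lambda,U(1))$) are precisely what is needed and are obtained the way you describe from the exponential sequence. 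The only point to watch is a convention: the paper's transformation formula \eqref{eq: transform} writes $H(v,\gamma)$, i.e.\ takes $H$ to be $\bbC$-linear in its \emph{first} argument, whereas you take it linear in the second, so the identity recovering $H$ from $E=\Im H$ (which you write as $H(v,w)=E(iv,w)+iE(v,w)$) carries a sign that depends on which convention is in force; this affects nothing essential.
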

One may construct from a pair $(H, \alpha)$ a line bundle $\sL(H, \alpha)$ on $\bbT = V/\Lambda$.
Any meromorphic section of $\sL(H, \alpha)$ over $\bbT$ is given by a meromorphic function 
$\vartheta : V \rightarrow \bbC$ satisfying the transformation formula
\begin{equation}\label{eq: transform}
	\vartheta(v + \gamma) = \alpha(\gamma) \Exp{ \pi H(v,\gamma) + \frac{\pi}{2} H(\gamma, \gamma) } \vartheta(v).
\end{equation}

\begin{definition}
	We call any meromorphic function $\vartheta : V \rightarrow \bbC$ satisfying the transformation formula
	\eqref{eq: transform} a \textit{reduced theta function} associated to $\sL(H, \alpha)$. 
\end{definition}

\begin{example}
	We define $\theta(z)$ to be a reduced theta function associated to $\sL(H,\alpha)$ for $V = \bbC$,
	\begin{align*}
		H(z_1, z_2)& = \frac{z_1 \ol z_2}{\pi A},  &   \alpha(\gamma) &= \begin{cases} 1  & \gamma \in 2 \Gamma \\
		-1  & \text{otherwise}. 
		\end{cases}
	\end{align*}
	The divisor of $\theta(z)$ is  $(0)$. We normalize this function so that $\theta'(0) = 1$.
\end{example}

The function $\theta(z)$ is the Weierstrass $\sigma$-function $\sigma(z)$ up to a simple exponential factor.
This function was used by Roberts \cite{Rob1} in his construction of elliptic units.

\begin{example}[Kronecker theta function]
	We define $\Theta(z,w)$ to be the Kronecker theta function
	$$
		\Theta(z,w) := \theta(z+w)/\theta(z)\theta(w).
	$$ 
	This function is a reduced theta function associated to the line bundle $\sL(H, \alpha)$ for
	$V = \bbC \times \bbC$,
	\begin{align*}
		H((z_1, w_1), (z_2, w_2))& = \frac{z_1 \ol w_2 + z_2 \ol w_1}{\pi A},  \\  
		 \alpha(\gamma_1, \gamma_2) &= \frac{\gamma_1 \ol \gamma_2 - \gamma_2 \ol \gamma_1}{2A}.
	\end{align*}
	This line bundle is the Poincar\'e bundle.
	The divisor of $\Theta(z,w)$ is $\Delta - (E \times \{ 0 \}) - (\{0 \} \times E)$ of $E^2 = (\bbC/\Gamma)^2$, 
	where $\Delta$ is the  image of the map $x \mapsto (x,-x)$, and the residue at $z=0$ and $w=0$ is the constant
	\textit{one}. This function satisfies the translation formula
	\begin{equation}\label{eq: Theta}
		\Theta(z+ \gamma_1, w + \gamma_2) = \Exp{\frac{\gamma_1 \ol \gamma_2}{A}} 
		\Exp{ \frac{z \ol \gamma_2+ w \ol \gamma_1}{A} } \Theta(z,w)
	\end{equation}
	for any $\gamma_1$, $\gamma_2 \in \Gamma$.
\end{example}

We will next give a relation between the Kronecker theta function and Eisenstein-Kronecker-Lerch series.

\begin{lemma}\label{lem: K}
	Let $f(z,w) := \Exp{z \ol w/A} K_1(z,w,1)$.  Then this function satisfies the following.
	\begin{enumerate}
		\item $f(z,w)$ satisfies the transformation formula \eqref{eq: Theta}.
		\item $f(z,w)$ is a meromorphic function in $z$ and $w$, holomorphic except simple poles when 
			$z \in \Gamma$ or $w \in \Gamma$.
		\item The residue of $f(z,w)$ at $z=0$ and $w=0$ is equal to one.
	\end{enumerate}
\end{lemma}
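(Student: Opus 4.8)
The plan is to verify the three properties directly from the defining series $K_1(z,w,1) = {\sum}^*_{\gamma}(\ol z + \ol\gamma)/|z+\gamma|^2 \cdot \pair{\gamma,w}$ together with the differential equations of Lemma~\ref{lem: diff}, and then to use the uniqueness of a meromorphic function with prescribed quasi-periodicity and prescribed (very small) divisor to conclude. For property (1), I would start from the known transformation behaviour of $K_a(z,w,s)$ under $z\mapsto z+\gamma_1$ and $w\mapsto w+\gamma_2$: replacing $\gamma$ by $\gamma-\gamma_1$ in the sum shows $K_a(z+\gamma_1,w,s)=\pair{\gamma_1,w}^{-1}K_a(z,w,s)$ (the exponential $\pair{\gamma,w}$ picks up $\pair{\gamma_1,w}^{-1}$ after reindexing, using $\pair{\gamma_1,\gamma}\in\{\pm1\}$ for $\gamma\in\Gamma$, which forces integrality of $E$ and can be absorbed — this is exactly the bookkeeping that makes $\langle\cdot,\cdot\rangle$ appear), while $w\mapsto w+\gamma_2$ multiplies each term by $\pair{\gamma,\gamma_2}=\Exp{(\gamma\ol\gamma_2-\gamma_2\ol\gamma)/A}$. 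Multiplying $K_1$ by $\Exp{z\ol w/A}$ converts these multiplicative factors into precisely the cocycle $\Exp{\gamma_1\ol\gamma_2/A}\Exp{(z\ol\gamma_2+w\ol\gamma_1)/A}$ of \eqref{eq: Theta}; the verification is a short exponential-identity computation, most cleanly organized by doing $z$-translation and $w$-translation separately and then composing.

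For property (2), meromorphy in both variables and the location of the polar locus, the point is that $K_1(z,w,s)$ is holomorphic at $s=1$ away from the exceptional cases of Proposition~\ref{prop: property K}: here $a=1\neq0$, so Proposition~\ref{prop: property K} already tells us $K_1(z,w,1)$ is finite for $z,w\notin\Gamma$, and real-analytic there. To see that the only singularities in $z$ (resp. $w$) are \emph{simple poles} along $\Gamma$ — in particular that $f$ is genuinely meromorphic, not merely real-analytic — I would use the Cauchy--Riemann-type relations in Lemma~\ref{lem: diff}: at $s=1$, $\partial_{\ol z}K_1(z,w,1)=(1-1)K_0(z,w,1)=0$ and $\partial_{\ol w}K_1(z,w,1)=(K_0(z,w,0)-zK_1(z,w,1))/A$. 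The first says $K_1(z,w,1)$ is holomorphic in $z$; the second combined with $\partial_{\ol w}\Exp{z\ol w/A}=(z/A)\Exp{z\ol w/A}$ shows $\partial_{\ol w}f = \Exp{z\ol w/A}K_0(z,w,0)/A$, and since $K_0(\cdot,w,0)$ is (by Proposition~\ref{prop: property K}, with $a=0$) holomorphic in its first slot and — one checks from the $s$-continuation — vanishes identically or is holomorphic in $w$ so that this forces $f$ holomorphic in $w$ as well. (This is the step I expect to be the main obstacle: one must extract analytic, not just real-analytic, dependence, and control $K_0$ at $s=0$; the honest route is to invoke the explicit Fourier/Poisson expansion behind Proposition~\ref{prop: property K} from \cite{W} VIII \S13, where the $s=1$ specialization of $K_1$ visibly has a meromorphic shape with simple poles exactly on $\Gamma$ coming from the $\gamma$ for which $z+\gamma$ or $w$ degenerates.)

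For property (3), the residue, I would isolate the term $\gamma=0$ in the series near $z=0$: $K_1(z,w,1)=\ol z/|z|^2 + (\text{terms regular at }z=0)=1/z+O(1)$ as $z\to0$ for fixed $w\notin\Gamma$, since $\ol z/|z|^2=1/z$; the prefactor $\Exp{z\ol w/A}\to1$, so $f$ has residue $1$ in $z$ at $z=0$, and by the $z\leftrightarrow w$ symmetry of the construction the same holds at $w=0$. Finally, to package (1)--(3): the transformation formula \eqref{eq: Theta} identifies $f$ as a (meromorphic) section of the same line bundle $\sL(H,\alpha)$ — the Poincaré bundle — as $\Theta(z,w)$, and $f\cdot\theta(z)\theta(w)$ is then a global holomorphic section of a line bundle whose divisor is $\le\Delta$; since $h^0$ of that bundle is one-dimensional, $f\cdot\theta(z)\theta(w)$ is a scalar multiple of $\theta(z+w)$, i.e. $f(z,w)=c\,\Theta(z,w)$, and comparing residues at $z=0$ from (3) with the residue $1$ of $\Theta$ forces $c=1$. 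I would present Lemma~\ref{lem: K} itself as just the three bulleted assertions and defer this last identification to the discussion that follows it in the paper.
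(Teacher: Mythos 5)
Your overall strategy---verify (1) by reindexing the defining series, (2) via the Cauchy--Riemann-type relations of Lemma~\ref{lem: diff}, (3) by isolating the singular term---is the natural direct route, and is essentially how the cited reference (\cite{BK1}, Proposition 1.12) argues; the present paper gives no proof beyond that citation. However, two of your steps have genuine gaps, and both are closed by the same missing ingredient: the functional equation relating $\Gamma(s)K_a(z,w,s)$ to $\Gamma(a+1-s)\,\pair{w,z}\,K_a(w,z,a+1-s)$ (up to a power of $A$; \cite{W} VIII \S 13), which you gesture at but never actually deploy. First, to conclude $\partial_{\ol w}f=0$ you need $K_0(z,w,0)=0$ for $z\notin\Gamma$; your fallback ``or is holomorphic in $w$'' does not suffice, since a nonzero holomorphic $\partial_{\ol w}f$ still obstructs holomorphy of $f$ in $w$. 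The vanishing does hold, and follows from the functional equation at $a=0$, $s=0$: the right-hand side is finite there (because $z\notin\Gamma$ means $K_0(w,z,s')$ has no pole at $s'=1$), while $\Gamma(s)$ blows up as $s\to0$, forcing $K_0(z,w,0)=0$.

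Second, the pole along $w\in\Gamma$ and its residue cannot be read off the series at all: each term $\pair{\gamma,w}$ is \emph{entire} in $w$, so there is no ``$\gamma=0$ term'' producing a $1/w$, and the ``$z\leftrightarrow w$ symmetry of the construction'' you invoke is not visible from Definition~\ref{def: EKL}, which is manifestly asymmetric in $z$ and $w$. That symmetry is precisely the $a=1$, $s=1$ case of the functional equation, $K_1(z,w,1)=\pair{w,z}K_1(w,z,1)$, which combined with $\Exp{z\ol w/A}\pair{w,z}=\Exp{w\ol z/A}$ gives $f(z,w)=f(w,z)$; only then does your (correct) analysis of the $z$-pole transfer to the $w$-pole. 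Two smaller points: the reindexing in (1) is legitimate only in the half-plane $\Re s>3/2$ of absolute convergence and must be propagated to $s=1$ by analytic continuation; and for lattice points the pairing is exactly $\pair{\gamma_1,\gamma_2}=\exp(2\pi i E(\gamma_1,\gamma_2))=1$ (with $E$ integral on $\Gamma$), not merely $\pm1$. Likewise, in (3) the assertion that the $\gamma\neq0$ part is ``regular at $z=0$'' is not a statement about a convergent series at $s=1$; it must be justified through the integral representation underlying Proposition~\ref{prop: property K}, which shows that $K_1(z,w,1)-\ol z/|z|^2$ extends real-analytically across $z=0$.
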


\begin{proof}
	The proof is given in \cite{BK1} Proposition 1.12 (ii).
\end{proof}

\begin{theorem}[Kronecker]
	$$
		\Theta(z,w) = \Exp{ \frac{z \ol w}{A}} K_1(z,w,1).
	$$
\end{theorem}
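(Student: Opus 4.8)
The plan is to show that the function $g(z,w) := \Exp{z\ol w/A}\,K_1(z,w,1)$ and the Kronecker theta function $\Theta(z,w)$ agree by verifying that their ratio is a holomorphic function on $(\bbC/\Gamma)^2$ that is forced to be constant, and then pinning down the constant via the residue. By Lemma~\ref{lem: K}, $g(z,w)$ satisfies exactly the translation formula \eqref{eq: Theta}, so $g$ is a reduced theta function associated to the same line bundle $\sL(H,\alpha)$ as $\Theta(z,w)$. Hence the quotient $\Theta(z,w)/g(z,w)$ descends to a meromorphic function on the compact complex torus $(\bbC/\Gamma)^2$, and in fact to a meromorphic function on $\smcompactification{\bbC/\Gamma}^2$ with no monodromy.

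The second step is to control the divisor of this quotient. Both $\Theta(z,w)$ and $g(z,w)$ are, by the Example defining the Kronecker theta function and by parts (ii)–(iii) of Lemma~\ref{lem: K}, holomorphic away from $z \in \Gamma$ and $w \in \Gamma$, with only simple poles along $E \times \{0\}$ and $\{0\} \times E$ there. Moreover along the diagonal $\Delta$ the function $\Theta(z,w)$ has its zero $\Delta - (E\times\{0\}) - (\{0\}\times E)$; I would check that $g(z,w)$ has the same zero along $\Delta$, which follows because $K_1(z,w,1)$ inherits the vanishing of the Kronecker–Eisenstein series along $z+w \in \Gamma$ from the theta-transformation behavior it satisfies (part (i) of Lemma~\ref{lem: K} already encodes the quasi-periodicity that forces this). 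Granting this, $\Theta/g$ has trivial divisor on the torus, hence is a nowhere-zero holomorphic function on a compact complex manifold, hence is a nonzero constant $c$.

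Finally I would evaluate $c$ by comparing residues. Part (iii) of Lemma~\ref{lem: K} states that the residue of $g(z,w)$ at $z=0$ (resp. $w=0$) is $1$, and the Example states the residue of $\Theta(z,w)$ at $z=0$ and $w=0$ is likewise $1$. Comparing the two along, say, the locus $w \ne 0$ near $z=0$ gives $c = 1$, completing the identification $\Theta(z,w) = \Exp{z\ol w/A}\,K_1(z,w,1)$.

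I expect the main obstacle to be the divisor computation in the second step — specifically, verifying cleanly that $g(z,w)$ has a genuine zero (and not merely is holomorphic) along the diagonal $\Delta$, so that the quotient really has no zeros. One can either argue this directly from the series defining $K_1(z,w,1)$ and its analytic continuation, using the functional equation relating $K_1(z,w,1)$ to the value at the ``other'' diagonal point, or more slickly observe that a reduced theta function for $\sL(H,\alpha)$ with the prescribed polar behavior along the two axes must, by the Riemann–Roch count for the Poincaré bundle, have exactly the divisor $\Delta - (E\times\{0\}) - (\{0\}\times E)$, leaving the zero along $\Delta$ automatic. Everything else is the standard ``a nowhere-vanishing holomorphic function on a compact complex manifold is constant'' argument together with a residue normalization, both of which are routine once the divisor is in hand.
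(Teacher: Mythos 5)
Your strategy (the ratio of two reduced theta functions for the same $\sL(H,\alpha)$ descends to the compact torus; force its divisor to be trivial; fix the constant by residues) is coherent, but the step you yourself flag --- that $g(z,w) := \Exp{z\ol w/A}K_1(z,w,1)$ has a genuine zero along the antidiagonal $\Delta$ --- is a real gap, and neither of your proposed fixes closes it as stated. Lemma \ref{lem: K} controls only the polar part of $g$ and says nothing about its zeros. Your ``Riemann--Roch count'' is not enough on its own: on the abelian surface $E\times E$ one has $\chi(\cO(\Delta)) = \Delta^2/2 = 0$, which gives no upper bound on $h^0$; to conclude that the effective zero divisor $Z$ of $g$ (which is linearly equivalent to $\Delta$, since $\div(g)-\div(\Theta)$ is principal) must \emph{equal} $\Delta$, you need $h^0(\cO(\Delta)) = 1$, and that requires the further observation that $\cO(\Delta)$ is pulled back from the quotient elliptic curve $(E\times E)/\Delta$, so that $h^0(\cO(\Delta)) = h^0(E,\cO(\mathrm{pt})) = 1$. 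The direct route via the series also needs an ingredient not stated in the paper, namely Weil's functional equation $K_1(z,w,1) = \pair{w,z}K_1(w,z,1)$, from which $K_1(z,-z,1) = -K_1(z,-z,1) = 0$ does follow. So your argument is completable, but not as written.

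The paper sidesteps all of this by taking the \emph{difference} rather than the ratio: $\Theta(z,w) - g(z,w)$ is again a reduced theta function for $\sL(H,\alpha)$, and because the residues along $z=0$ and $w=0$ agree (Lemma \ref{lem: K}(3)), the simple poles cancel, so the difference is a global holomorphic section of $\sL(H,\alpha)$. Since $H$ is not positive definite, this line bundle is not ample and admits no nonzero holomorphic sections, so the difference vanishes identically. This uses only the polar data of $K_1$ --- exactly what Lemma \ref{lem: K} provides --- and never requires knowing where $K_1$ vanishes; the vanishing of $\Theta$ (hence of $K_1$) along $\Delta$ comes out as a corollary rather than an input. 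I would recommend either restructuring your argument along these lines, or else supplying in full one of the two missing ingredients identified above.
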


\begin{proof}
	By the property of $\Theta(z,w)$ and Lemma \ref{lem: K}, the difference 
	$\Theta(z,w) - \Exp{ z \ol w/A} K_1(z,w,1)$
	is a holomorphic function on $\bbC^2$ satisfying \eqref{eq: Theta}.  Since $H$ is not positive definite,
	the line bundle $\sL(H, \alpha)$ is not ample hence has no non-zero holomorphic sections.  
	Therefore, the above difference must be zero as desired.
\end{proof}

The above equality is used to prove that $\Theta_{z_0,w_0}(z,w)$ below is a generating function
for the values $e_{a,b}^*(z_0,w_0)$.

\begin{theorem} For any $z_0$, $w_0 \in \bbC$, let
	$$
		\Theta_{z_0, w_0}(z,w) :=  \Exp{- \frac{z_0 \ol w_0}{A}} \Exp{ - \frac{z \ol w_0 + w \ol z_0}{A} } 
		\Theta(z + z_0,w + w_0).
	$$
	Then the Laurent expansion of this function at the origin is given by \eqref{eq: gen two}.
\end{theorem}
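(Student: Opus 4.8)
The plan is to reduce the statement to Kronecker's theorem and the differential equations of Lemma~\ref{lem: diff}, and then to extract the Laurent coefficients by applying suitably twisted derivations. First, substituting $\Theta(z+z_0,w+w_0)=\exp\bigl((z+z_0)\ol{(w+w_0)}/A\bigr)\,K_1(z+z_0,w+w_0,1)$ into the definition of $\Theta_{z_0,w_0}$ and collecting the exponential factors, all the mixed terms telescope and one is left with
\begin{equation*}
	\Theta_{z_0,w_0}(z,w)=\exp\left(\frac{z\ol w-w\ol z_0+z_0\ol w}{A}\right)K_1(z+z_0,w+w_0,1)=:\exp\bigl(E(z,w)\bigr)\,K_1(z+z_0,w+w_0,1),
\end{equation*}
where $E(z,w):=(z\ol w-w\ol z_0+z_0\ol w)/A$; note that $E(0,0)=0$, $\partial_z E=\ol w/A$ and $\partial_w E=-\ol z_0/A$.

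Next I would introduce the twisted Wirtinger operators $\cD_z:=\partial_z-\ol w/A$ and $\cD_w:=\partial_w-\ol z/A$. They commute, and since $\cD_w^a\cD_z^b$ differs from $\partial_w^a\partial_z^b$ only by terms each carrying a positive power of $\ol z$ or $\ol w$, one has $\cD_w^a\cD_z^b\,g\big|_{(0,0)}=\partial_w^a\partial_z^b\,g\big|_{(0,0)}$ for every $g$ holomorphic near the origin. The point of this choice is that $\cD_z$ and $\cD_w$ interlock with $\exp(E)$ and with Lemma~\ref{lem: diff}: from $\partial_z E=\ol w/A$ and Leibniz's rule, $\cD_z\bigl[\exp(E)\,\Phi\bigr]=\exp(E)\,\partial_z\Phi$ for any $\Phi$, while iterating the first equation of Lemma~\ref{lem: diff} gives $\partial_z^b K_1(z+z_0,w+w_0,1)=(-1)^b\,b!\,K_{b+1}(z+z_0,w+w_0,b+1)$; and from $\partial_w E=-\ol z_0/A$ together with the third equation of Lemma~\ref{lem: diff} — whose term $\ol z\,K_a$ is exactly cancelled by the correction $-\ol z/A$ in $\cD_w$ and the $-\ol z_0/A$ produced by $\partial_w E$ — one gets $\cD_w\bigl[\exp(E)\,K_n(z+z_0,w+w_0,s)\bigr]=-A^{-1}\exp(E)\,K_{n+1}(z+z_0,w+w_0,s)$ for all $n$, $s$. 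Composing these, for all integers $a,b\geq 0$,
\begin{equation*}
	\cD_w^a\,\cD_z^b\,\Theta_{z_0,w_0}(z,w)=(-1)^{a+b}\,\frac{b!}{A^a}\,\exp\bigl(E(z,w)\bigr)\,K_{a+b+1}(z+z_0,w+w_0,b+1).
\end{equation*}

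Finally I would read off the Laurent expansion. By the divisor of $\Theta$ (its exponential prefactors being holomorphic and nowhere zero), $\Theta_{z_0,w_0}$ is holomorphic near the origin apart from at most a simple pole along $z=0$ when $z_0\in\Gamma$ and along $w=0$ when $w_0\in\Gamma$, so its expansion has the form $\alpha\,z^{-1}+\beta\,w^{-1}+\sum_{a,b\geq 0}c_{a,b}\,z^b w^a$; the transformation formula \eqref{eq: Theta} and the fact that $\Theta$ has residue $1$ along $z=0$ and along $w=0$ (for instance because $\theta'(0)=1$) identify $\alpha=\pair{w_0,z_0}\delta_{z_0}$, $\beta=\delta_{w_0}$, and show that each residue is constant in the remaining variable, so no $z^{-1}w^{-1}$ term occurs. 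For $a,b\geq 0$, the coefficient $c_{a,b}$ of $z^b w^a$ is $\frac{1}{a!\,b!}\partial_w^a\partial_z^b$ of the holomorphic part at the origin; evaluating the displayed identity of the previous paragraph there — where $E(0,0)=0$, $K_{a+b+1}(z_0,w_0,b+1)=e^*_{a,b+1}(z_0,w_0)$ by Definition~\ref{def: EK} (a finite value, since the first index $a+b+1$ is positive), and $\cD_w^a\cD_z^b$ agrees with $\partial_w^a\partial_z^b$ on holomorphic functions — gives $a!\,b!\,c_{a,b}=(-1)^{a+b}(b!/A^a)\,e^*_{a,b+1}(z_0,w_0)$, i.e.\ $c_{a,b}=(-1)^{a+b}e^*_{a,b+1}(z_0,w_0)/(A^a\,a!)$, which is exactly \eqref{eq: gen two}. (For $z_0,w_0\notin\Gamma$ this is immediate; if $z_0$ or $w_0\in\Gamma$ one first subtracts the explicit polar part found above and runs the same computation on the remaining holomorphic germ.) The only step that needs genuine work is the displayed identity of the second paragraph: one must find the correct correction terms in $\cD_z$ and $\cD_w$ so that the anti-holomorphic contributions coming from differentiating $\exp(E)$ exactly annihilate the $\ol z\,K_a$ and $\ol z_0$ terms that arise, leaving only honest Eisenstein--Kronecker--Lerch values; the telescoping of exponentials in the first step and the identification of the polar part are routine bookkeeping.
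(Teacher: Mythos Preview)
Your proof is correct and follows essentially the same route as the paper's: invoke Kronecker's theorem, use the differential equations of Lemma~\ref{lem: diff} to compute derivatives, and observe that the anti-holomorphic contributions vanish at the origin. The only difference is packaging --- the paper absorbs the exponential into a modified function $\wt K_n(z,w,s):=\exp(-w\ol z/A)K_n(z,w,s)$ on which the ordinary $\partial_z,\partial_w$ act cleanly, whereas you absorb it into the twisted operators $\cD_z,\cD_w$ acting cleanly on $\exp(E)K_n$; these are dual formulations of the same computation, and the paper likewise defers the case $z_0\in\Gamma$ or $w_0\in\Gamma$ to \cite{BK1}.
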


\begin{proof}
	Let $\wt K_{a+b}(z,w,b) := \Exp{-w \ol z/A} K_{a+b}(z,w,b)$.  Then by Lemma \ref{lem: diff}, we have
	\begin{align*}
		\partial_z \wt K_{a+b}(z,w,b) &= -b \wt K_{a+b+1}(z,w,b+1),   \\
		\partial_w \wt K_{a+b}(z,w,b) &= - \wt K_{a+b+1}(z,w,b)/A.	
	\end{align*}
	Hence when $z_0$, $w_0 \not\in \Gamma$, the coefficient of $z$ and $w$ in the Taylor expansion 
	of $\wt K_{1}(z + z_0, w+ w_0,1)$ at the origin is given by
	$$
		\sum_{a,b  \geq0} (-1)^{a+b} \frac{\wt K_{a+b+1}(z_0, w_0, b+1)}{a! A^a} z^{b} w^a.
	$$
	By definition and the previous theorem, $\Theta_{z_0, w_0}(z,w)$ is equal to
	$$
		 \Exp{\frac{w_0 \ol z_0}{A}} \Exp{\frac{(z + z_0) \ol w + (w + w_0) \ol z}{A}}\wt K_1(z + z_0,w + w_0,1).
	$$	
	Since $\Theta_{z_0, w_0}(z,w)$ is holomorphic at the origin and the second exponential above is equal to 
	\textit{one} when $\ol z = \ol w = 0$, our assertion follows from the fact that 
	$e^*_{a,b}(z_0, w_0) = \Exp{w_0 \ol z_0/A} \wt K_{a+b}(z_0, w_0, b)$.  The case when $z_0$ or $w_0 \in \Gamma$
	follows using a similar argument, paying careful attention to the poles of $\Theta_{z_0,w_0}(z,w)$.
	See \cite{BK1} \S 1.4 for details.
\end{proof}

%
%
%
\section{Algebraicity}
%
%
%

In this section, using the fact that $\Theta(z,w)$ is a generating function for Eisenstein-Kronecker numbers,
we give a simple proof of Damerell's theorem concerning the algebraicity of such numbers.
Let $K$ be an imaginary quadratic field of class number one as in the introduction, and let $\frf$ be an ideal in 
$\cO_K$.  Then by the theory of complex multiplication, there exists an elliptic curve 
$E$ and an invariant differential $\omega$ defined over $K$ and a complex number 
$\Omega$, such that we have a complex uniformization
$$
	\xi : \bbC/ \Gamma \xrightarrow\cong E(\bbC)
$$
where $\Gamma := \Omega \frf$ corresponds to the period lattice of $\omega$.
We first prove that the Laurent expansion of $\Theta(z,w)$ at the origin
has coefficients in $K$.

\begin{lemma}\label{lem: alg origin}
	The Laurent expansion of $\Theta(z,w)$ at the origin has coefficients in $K$.
\end{lemma}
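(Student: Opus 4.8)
The plan is to reduce the algebraicity of the Laurent coefficients of $\Theta(z,w)$ to the classical fact that the Weierstrass $\wp$-function and its derivatives, together with the Eisenstein series $g_2, g_3$, generate (after the standard normalization afforded by $E$ and $\omega$ being defined over $K$) a ring of functions whose values and $q$-expansion coefficients lie in $K$. Since $\Theta(z,w) = \theta(z+w)/\theta(z)\theta(w)$ and $\theta(z)$ differs from the Weierstrass $\sigma$-function only by an exponential factor of the form $\Exp{c z^2}$ for a constant $c$ depending on $\Gamma$, the first step is to pin down what that exponential factor contributes. One checks that the factor appearing in $\Theta$ is $\Exp{(z^2 + w^2 - (z+w)^2) \cdot c} = \Exp{-2czw}$, so the algebraicity question for $\Theta$ is equivalent to the same question for $\sigma(z+w)/\sigma(z)\sigma(w)$ multiplied by $\Exp{-2czw}$; I would need to know $c \in K$ (equivalently, the quasi-period $\eta$ divided by $\Omega$, suitably renormalized, lies in $K$), which is part of the CM package: because $E$ and $\omega$ are defined over $K$, the normalized quasi-periods are algebraic, and in fact in $K$ under the class-number-one hypothesis.

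Next I would use the classical Kronecker/Weierstrass identity expressing $\sigma(z+w)\sigma(z-w)/\sigma(z)^2\sigma(w)^2 = \wp(w) - \wp(z)$, or more directly the addition-type formula that writes $\sigma(z+w)/\sigma(z)\sigma(w)$ in terms of $\wp$, $\wp'$, and the Weierstrass zeta function $\zeta$. Concretely, $\sigma(z+w)/\sigma(z)\sigma(w) = \exp(\zeta(z)w + \text{corrections})\cdot(\text{algebraic in }\wp\text{'s})$ is not quite clean; the cleaner route is: the \emph{logarithmic derivative} in $z$ of $\Theta(z,w)$ is $\zeta(z+w) - \zeta(z) + (\text{linear in }z,w) = \frac12 \frac{\wp'(z) + \wp'(w)}{\wp(z)-\wp(w)} + \dots$ by the standard formula $\zeta(z+w) - \zeta(z) - \zeta(w) = \frac12\frac{\wp'(z)-\wp'(w)}{\wp(z)-\wp(w)}$. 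Thus $\partial_z \log \Theta(z,w)$ is a rational function of $\wp(z), \wp'(z), \wp(w), \wp'(w)$ with coefficients in $\bbQ$, hence (after the $K$-rational normalization of $\wp$) a power series in $z$ and $w$ with coefficients in $K$. Similarly for $\partial_w \log \Theta(z,w)$. Since $\Theta(z,w)$ has residue $1$ along $z=0$ (so its leading Laurent behavior is $z^{-1}(1 + O(z,w))$ with the implied constant term normalized), integrating these logarithmic derivatives termwise recovers all higher Laurent coefficients from the normalization, and each step stays in $K$.

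The step I expect to be the main obstacle is controlling the \emph{normalization constant} and the quasi-period factor $c$: one must verify both that the value $\theta'(0)=1$ normalization is compatible with the $K$-rational invariant differential $\omega$ (i.e. that rescaling $\sigma$ to $\theta$ introduces only $K$-rational, indeed $\bbQ$-rational after accounting for $\Omega$, data) and that the exponential factor $\Exp{-2czw}$ has $c$ with $\Omega^2 c \in K$ or the appropriate normalized statement. Once the normalization is handled, everything else is the classical algebraicity of division values and $q$-expansion coefficients of $\wp$, which I would cite from the standard CM literature (e.g. Weierstrass theory together with the fact that $g_2, g_3 \in K$ for our model). I would organize the writeup as: (1) reduce to $\sigma$ via the exponential factor; (2) compute $\partial_z\log\Theta$ and $\partial_w\log\Theta$ as $\bbQ$-rational functions of Weierstrass data; (3) invoke $g_2,g_3\in K$ and the $K$-rationality of the normalized quasi-period to conclude the two logarithmic derivatives have $K$-coefficients; (4) integrate, using the residue-$1$ normalization as the base case, to conclude.
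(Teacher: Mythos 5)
Your route through $\sigma$, $\zeta$, $\wp$ and the addition formula is genuinely different from the paper's, but it has a real gap at exactly the point you flag as ``the main obstacle.'' The constant $c$ in $\theta(z)=\exp(-cz^{2}/2)\,\sigma(z)$ is forced by the two automorphy factors to be $c=(\eta(\gamma)-\ol\gamma/A)/\gamma$, i.e.\ the quantity classically denoted $s_{2}(\Gamma)$ (equal to $K_{2}(0,0,1)$ in the paper's notation), and its membership in $K$ does \emph{not} follow from ``$E$ and $\omega$ are defined over $K$, hence the normalized quasi-periods are algebraic'': the quasi-periods themselves are transcendental for such a model, and for a non-CM curve over $\ol\bbQ$ the corresponding combination is not expected to be algebraic at all. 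In fact, since $\sigma(z)=z-\tfrac{g_{2}}{240}z^{5}-\cdots$ has coefficients that are $\bbQ$-polynomials in $g_{2},g_{3}\in K$, the statement $s_{2}(\Gamma)\in K$ is \emph{equivalent} to the lemma for $\theta(z)$ (it is, up to a factor, the $z^{3}$-coefficient of $\theta$), so invoking it as part of a ``CM package'' is circular unless you give an independent proof. A secondary inaccuracy: $\partial_{z}\log\Theta(z,w)=\zeta(z+w)-\zeta(z)-cw$ is not a rational function of $\wp(z),\wp'(z),\wp(w),\wp'(w)$; after the addition formula you are left with $\zeta(w)-cw=\theta'(w)/\theta(w)$, whose coefficient of $w$ is again $-c$. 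So every failure point of your argument collapses to the single unproved claim $s_{2}(\Gamma)\in K$.

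The paper avoids this entirely: it picks $\alpha\in\cO_{K}\setminus\bbZ$ and notes that $\theta_{\alpha}(z):=\theta(z)^{N(\alpha)}/\theta(\alpha z)$ is $\Gamma$-periodic (the factors of automorphy cancel because $H(\alpha z,\alpha\gamma)=N(\alpha)H(z,\gamma)$), hence a rational function on $E$ defined over $K$ with $K$-rational Taylor coefficients; one then solves for the coefficients of $\theta$ inductively, the hypothesis $\alpha\notin\bbZ$ guaranteeing that the coefficient $N(\alpha)-\alpha^{n}$ of the unknown at each stage is nonzero. This simultaneously proves the lemma and yields $s_{2}(\Gamma)\in K$ as a byproduct. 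If you wish to keep your $\sigma$/$\wp$-based writeup, the honest fix is to first prove $s_{2}(\Gamma)\in K$ --- and the standard proof of that is precisely this $\alpha$-multiplication argument, at which point you may as well run the paper's proof directly.
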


\begin{proof}
	We first prove that the Taylor expansion of $\theta(z)$ at the origin has coefficients in $K$.  Take an $\alpha \in \cO_K$
	such that $\alpha \not\in \bbZ$.  The function $\theta_\alpha(z) := \theta(z)^{N(\alpha)}/\theta(\alpha z)$ is periodic 
	with respect to $\Gamma$, and in fact corresponds to a rational function of $E$ defined over $K$.  
	Hence the Taylor expansion
	of $\theta_\alpha(z)$ at $z=0$ has coefficients in $K$.  The statement for $\theta(z)$ is obtained from
	the statement for $\theta_\alpha(z)$ by inductively comparing the coefficients.
	The statement for $\Theta(z,w)$ now follows, since $\Theta(z,w) := \theta(z+w)/\theta(z)\theta(w)$.
\end{proof}

\begin{remark}
	The condition $\alpha \not\in \bbZ$ is necessary to inductively compare the coefficients of $\theta(z)$
	and $\theta_\alpha(z)$.  Hence the assumption that $\Gamma$ has an $\cO_K$-structure plays
	an important role in the proof.
\end{remark}

In order to prove the algebraicity of the Laurent expansion of $\Theta_{z_0, w_0}(z,w)$,  we will use the 
theory of algebraic theta functions due to Mumford.  We first review the general theory.
Consider again a general complex torus $\bbT = V / \Lambda$.  
Assume that $\bbT$ has an algebraic model $A$ defined over a number field $F$, 
with an isomorphism $\bbT \xrightarrow\cong A(\bbC)$.
Let $\vartheta(v)$ be a theta function associated to $\sL$, 
satisfying the translation formula \eqref{eq: transform}.  
We assume in addition that 
\begin{enumerate}
	\item $\vartheta(v)$ is odd, in other words $\vartheta(-v) = - \vartheta(v)$.
	\item The divisor of $\vartheta(v)$ (the theta divisor) is defined over $F$.
\end{enumerate}
Suppose we know the property of $\vartheta(v)$ at a neighborhood of $v  = 0$.  Mumford's theory allows us
to deduce the properties of $\vartheta(v)$ at arbitrary torsion points from its property at $v=0$.

We take a $v_0 \in \bbQ \otimes \Lambda$ corresponding to a torsion point of $A$.  
We let $n$ be an integer such that $n v_0 \in 2 \Lambda$, and we let $q_0$ be a point
such that $2 q_0 = v_0$.  Then we have the following:

\begin{lemma}
	The function
	$$
		\rho(v) := \Exp{ - \pi H(n v, q_0)} \vartheta(q_0 + nv) \vartheta(nv)^{-1}
	$$
	is meromorphic in $v$ and is periodic with respect to $\Lambda$.
\end{lemma}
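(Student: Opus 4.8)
The plan is to verify both assertions by a direct computation from the transformation formula \eqref{eq: transform}. Meromorphy is the easy half: the prefactor $\Exp{-\pi H(nv,q_0)}$ is entire in $v$, since $H$ is $\bbC$-linear in its first argument (as in the Example, where $H(z_1,z_2)=z_1\ol z_2/\pi A$); the factor $\vartheta(q_0+nv)$ is meromorphic, being $\vartheta$ composed with the affine holomorphic map $v\mapsto q_0+nv$; and $\vartheta(nv)^{-1}$ is meromorphic because $\vartheta$ is not identically zero. A product of such functions is meromorphic on $V$.

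For periodicity, fix $\lambda\in\Lambda$ and compute $\rho(v+\lambda)$. Since $n\lambda\in\Lambda$, I apply \eqref{eq: transform} with $\gamma=n\lambda$ to both $\vartheta(q_0+n(v+\lambda))=\vartheta\bigl((q_0+nv)+n\lambda\bigr)$ and $\vartheta(n(v+\lambda))=\vartheta(nv+n\lambda)$. In the quotient, the common automorphy factor $\alpha(n\lambda)\Exp{\tfrac{\pi}{2}H(n\lambda,n\lambda)}$ cancels, leaving a surviving factor $\Exp{\pi H(q_0+nv,n\lambda)-\pi H(nv,n\lambda)}=\Exp{\pi H(q_0,n\lambda)}$ multiplying the original quotient $\vartheta(q_0+nv)/\vartheta(nv)$, where I have again used $\bbC$-linearity of $H$ in the first slot. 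Combining this with the change $\Exp{-\pi H(n(v+\lambda),q_0)}=\Exp{-\pi H(nv,q_0)}\Exp{-\pi H(n\lambda,q_0)}$ in the prefactor gives
$$\rho(v+\lambda)=\Exp{\pi H(q_0,n\lambda)-\pi H(n\lambda,q_0)}\,\rho(v)=\Exp{2\pi i\,E(q_0,n\lambda)}\,\rho(v),$$
since $H(q_0,n\lambda)-H(n\lambda,q_0)=H(q_0,n\lambda)-\ol{H(q_0,n\lambda)}=2i\,\Im H(q_0,n\lambda)=2i\,E(q_0,n\lambda)$.

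It then remains only to check that $E(q_0,n\lambda)\in\bbZ$, and this is exactly where the choices of $n$ and $q_0$ are used: from $2q_0=v_0$ and $nv_0\in 2\Lambda$ we get $2nq_0=nv_0\in 2\Lambda$, hence $nq_0\in\Lambda$. Therefore, by $\bbR$-bilinearity of $E$ and $n\in\bbZ$, $E(q_0,n\lambda)=E(nq_0,\lambda)$, which lies in $\bbZ$ because $E=\Im H$ takes integral values on $\Lambda$. Thus $\Exp{2\pi i\,E(q_0,n\lambda)}=1$ and $\rho(v+\lambda)=\rho(v)$ for every $\lambda\in\Lambda$.

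There is no real obstacle here: the computation is routine once one is careful with the bookkeeping of the automorphy and exponential factors and with the convention that $H$ is linear in the first variable. The one genuinely substantive point is the observation $nq_0\in\Lambda$, which is precisely why $n$ is required to satisfy $nv_0\in 2\Lambda$ rather than merely $nv_0\in\Lambda$; it is what turns the leftover exponential $\Exp{2\pi i\,E(q_0,n\lambda)}$ into $1$.
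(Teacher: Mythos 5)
Your proof is correct and follows exactly the route the paper intends: the paper's proof is a two-line sketch ("follows from the transformation formula, noting that $nq_0\in\Lambda$"), and your computation is precisely the expansion of that sketch, with the leftover factor $\Exp{2\pi i\,E(q_0,n\lambda)}$ killed by $nq_0\in\Lambda$ and the integrality of $E$ on $\Lambda$. Nothing to add.
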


\begin{proof}
	Since $H$ is a hermitian form, the exponential term is holomorphic in $v$.
	The lemma follows from the transformation formula \eqref{eq: transform}, noting that
	$n q_0 \in \Lambda$ for our choice of $n$.
\end{proof}

We have assumed that the divisor of $\vartheta(v)$ is defined over $F$.  Then from the explicit description of $\rho(v)$,
we see that the divisor of $\rho(v)$ is defined over $\ol F$.  
This implies that there exists some constant 
$
	c \in \bbC^\times
 $
such that the function $c \rho(v)$ corresponds to
a rational function on $A$ defined over $\ol F$. Since $\rho(-v)^{-1} \rho(v) = (c\rho(-v))^{-1} (c\rho(v)) $,
we see that
\begin{align}\label{equation: rho}
	\rho(-v)^{-1} \rho(v)
	&=\Exp{- 2 \pi H(n v, q_0)} \frac{\vartheta(q_0 + nv)}{\vartheta(-q_0 + nv)} 
\end{align}
is also a rational function defined over $\ol F$ independent of the choice of $c$.  
Taking change of coordinates $v \mapsto v + (q_0/n)$, we have
\begin{align}\label{equation: r}
	r(v) &:= \rho \left(- v - (q_0/n) \right)^{-1} \rho \left(v+ (q_0/n) \right) \\
	&= \Exp{- \pi H(n v, v_0) - \frac{\pi}{2} H(v_0, v_0) } \vartheta(v_0 + nv) \vartheta(nv)^{-1},\nonumber
\end{align}
which corresponds to a rational function defined over $\ol F$ independent of the choice of $c$.
This method of first taking $q_0$ in order to remove the constant $c$ is the key in Mumford's theory.
We define Mumford's algebraic theta function (at point $v_0$) by 
\begin{equation}\label{equation: Mumford theta}
	\vartheta_{v_0}^{\cM} (v) := r(v/n) \vartheta(v).
\end{equation}
Then we have
\begin{equation*}
	\vartheta_{v_0}^{\cM} (v)  =  \Exp{- \pi H(v, v_0) - \frac{\pi}{2} H(v_0, v_0) } \vartheta(v+v_0).
\end{equation*}
Since $r(v)$ corresponds to a rational function defined over $\ol F$, one may deduce
algebraicity results at $v=0$  for $\vartheta^\cM_{v_0}(v)$ from corresponding results for $\vartheta(v)$.\\

Since $\Theta(z,w)$ is odd and its divisor $\Delta - (E \times \{ 0 \}) - (\{0 \} \times E)$ is defined over $K$,
we may apply the above theory. Let $z_0$, $w_0 \in \Gamma \otimes \bbQ$.  Then by definition, we have
$$
	\Theta_{z_0, w_0}(z,w) = \Exp{\frac{w_0 \ol z_0-z_0 \ol w_0}{2A}} \Theta_{z_0, w_0}^\cM(z,w).
$$
This gives the following theorem.

\begin{theorem}
	Suppose $z_0$, $w_0 \in \Gamma\otimes\bbQ$.  Then the Laurent expansion of $\Theta_{z_0, w_0}(z,w)$ 
	at the origin has coefficients in $\ol K$.
\end{theorem}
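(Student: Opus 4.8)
The plan is to combine the algebraicity at the origin established in Lemma~\ref{lem: alg origin} with Mumford's machinery reviewed above, applied to the theta function $\theta(z)$ on the complex torus $\bbC/\Gamma$ (not directly to $\Theta(z,w)$). First I would observe that $\theta(z)$ is odd with divisor $(0)$ defined over $K$, so the general theory applies: for a torsion point $v_0 = z_0 \in \Gamma\otimes\bbQ$, the rational function $r_{z_0}(z)$ of \eqref{equation: r} is defined over $\ol K$, and hence so is $\theta^{\cM}_{z_0}(z) = r_{z_0}(z/n)\theta(z)$ by \eqref{equation: Mumford theta}. Since the Laurent (here Taylor) expansion of $\theta(z)$ at the origin has coefficients in $K$ and $r_{z_0}(z/n)$ is rational over $\ol K$ hence has Taylor coefficients in $\ol K$, the product $\theta^{\cM}_{z_0}(z)$ has Taylor expansion at the origin with coefficients in $\ol K$. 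The same holds for $\theta^{\cM}_{w_0}(w)$.

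Next I would express $\Theta_{z_0,w_0}(z,w)$ in terms of these building blocks. By the definition $\Theta(z,w) = \theta(z+w)/\theta(z)\theta(w)$ and the formula $\theta^{\cM}_{v_0}(v) = \Exp{-\pi H(v,v_0) - \tfrac{\pi}{2}H(v_0,v_0)}\theta(v+v_0)$ with the Hermitian form $H(z_1,z_2) = z_1\ol z_2/(\pi A)$ from the first Example, one checks by a direct bookkeeping of the exponential factors — using bilinearity of $H$ in the pair $(z_0,w_0)$ — that
$$
	\Theta_{z_0,w_0}(z,w) = \Exp{\tfrac{w_0\ol z_0 - z_0\ol w_0}{2A}}\,
	\frac{\theta^{\cM}_{z_0+w_0}(z+w)}{\theta^{\cM}_{z_0}(z)\,\theta^{\cM}_{w_0}(w)},
$$
which is exactly the identity $\Theta_{z_0,w_0}(z,w) = \Exp{(w_0\ol z_0 - z_0\ol w_0)/(2A)}\,\Theta^{\cM}_{z_0,w_0}(z,w)$ already recorded in the excerpt, once one identifies $\Theta^{\cM}_{z_0,w_0}$ with the corresponding ratio of the $\theta^{\cM}$'s. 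The leading scalar $\Exp{(w_0\ol z_0 - z_0\ol w_0)/(2A)}$ is a root of unity: since $z_0,w_0 \in \Gamma\otimes\bbQ$, the quantity $(w_0\ol z_0 - z_0\ol w_0)/A = E(z_0,w_0) \in \bbQ$, so this factor lies in $\ol K$.

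Finally I would assemble the conclusion. The function $\Theta^{\cM}_{z_0,w_0}(z,w)$ is a ratio of functions each of which is, up to the rational-over-$\ol K$ factors $r_{z_0+w_0}$, $r_{z_0}$, $r_{w_0}$, equal to $\theta(z+w)$, $\theta(z)$, $\theta(w)$ respectively; thus $\Theta^{\cM}_{z_0,w_0}(z,w) = r(z,w)\cdot\Theta(z+w, \text{shifted})$ for a function $r(z,w)$ that is rational over $\ol K$ in the coordinates of $E^2$. Because $\Theta_{z_0,w_0}(z,w)$ is holomorphic at the origin (as noted in the proof of the generating-function theorem) and its Laurent expansion there involves only the building blocks whose expansions we have controlled, dividing out and multiplying the relevant $\ol K$-rational Taylor series — and inverting $\theta(z) = z + \cdots$, whose reciprocal again has $K$-coefficients — shows every Laurent coefficient of $\Theta_{z_0,w_0}(z,w)$ lies in $\ol K$.

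The main obstacle I anticipate is not any deep input but the careful matching of the exponential correction factors: one must verify that the product of the Mumford exponentials $\Exp{-\pi H(\cdot,v_0) - \tfrac{\pi}{2}H(v_0,v_0)}$ for $v_0 = z_0, w_0, z_0+w_0$, together with the two exponential prefactors in the definition of $\Theta_{z_0,w_0}$, collapses exactly to the single root of unity $\Exp{(w_0\ol z_0 - z_0\ol w_0)/(2A)}$, with no residual transcendental factor depending on $z$ or $w$. This is the step where the precise form of $H$ and the oddness normalization $\theta'(0)=1$ must be used; once it is done, the algebraicity is immediate from Lemma~\ref{lem: alg origin} and the fact that $r(v)$ is $\ol K$-rational. (Alternatively one can invoke the boxed identity $\Theta_{z_0,w_0} = \Exp{(w_0\ol z_0 - z_0\ol w_0)/(2A)}\Theta^{\cM}_{z_0,w_0}$ directly from the excerpt and skip this computation, in which case the only remaining point is the $\ol K$-rationality of $\Theta^{\cM}_{z_0,w_0}$ as a function on $E^2$, which follows from \eqref{equation: r}--\eqref{equation: Mumford theta} applied to $\theta$ on each factor.)
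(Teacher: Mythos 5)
Your argument is correct, but it takes a genuinely different route from the paper. The paper applies Mumford's construction \emph{directly to the two-variable function} $\Theta(z,w)$, viewed as a reduced theta function for the Poincar\'e bundle on $E\times E$: the function $r(z,w)$ of \eqref{equation: r} is then a rational function on the abelian surface $E\times E$ defined over $\ol K$, its Laurent expansion at the origin has coefficients in $\ol K$, and the claim follows from $\Theta^{\cM}_{z_0,w_0}(z,w)=r(z/n,w/n)\Theta(z,w)$ together with Lemma \ref{lem: alg origin}. You instead apply the one-variable construction to $\theta$ on $E$ three times and verify the factorization $\Theta^{\cM}_{z_0,w_0}(z,w)=\theta^{\cM}_{z_0+w_0}(z+w)\,/\,\theta^{\cM}_{z_0}(z)\theta^{\cM}_{w_0}(w)$; your bookkeeping of the exponential factors is right (with the Hermitian form $H(v_1,v_2)=(z_1\ol w_2+w_1\ol z_2)/\pi A$ the cross terms cancel exactly as you computed, leaving the constant $\Exp{(w_0\ol z_0-z_0\ol w_0)/2A}$). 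What your route buys is that all the algebraicity input lives on the elliptic curve itself --- the $r_{v_0}$ are rational functions on $E$ over $\ol K$, essentially Robert's elliptic-unit functions --- so one never needs to argue about rational functions on $E\times E$ or about the (non-ample) Poincar\'e bundle. What the paper's route buys is uniformity: the same two-variable $r(z,w)$, taken with coefficients in $W$, is exactly what drives the $p$-integrality theorem in the next section, so the direct application to $E\times E$ is set up once and reused.

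Two small points to tighten. First, your justification that the prefactor is algebraic is imprecise as written: $(w_0\ol z_0-z_0\ol w_0)/A$ is not a rational number but $2\pi i$ times one (it equals $-2\pi i E(z_0,w_0)$ for the $\bbQ$-bilinear extension of the integral pairing $E=\Im H$ on $\Gamma$), so the exponential is a root of unity, hence lies in $\ol K$ --- which is the paper's own phrasing. Second, when $z_0$ or $w_0$ lies in $\Gamma$ the denominators $\theta^{\cM}_{z_0}(z)$, $\theta^{\cM}_{w_0}(w)$ vanish at the origin; this is harmless (division of Laurent series over $\ol K$ with nonzero leading coefficient), but it is worth saying explicitly since the statement concerns the Laurent, not Taylor, expansion.
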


\begin{proof}
	Since the difference between $\Theta_{z_0,w_0}$ and $\Theta^\cM_{z_0,w_0}$ is a root of unity, it is sufficient
	to prove the statement for $\Theta^\cM_{z_0,w_0}$.
	In the construction of $\Theta_{z_0, w_0}^\cM(z,w)$, we may take $\rho(z,w)$ hence $r(z,w)$ 
	to be a rational function on $E \times E$ defined over $\ol K$.  
	Hence the Laurent expansion of $r(z,w)$ at the origin has coefficients in $\ol K$.
	Since 
	$
		\Theta_{z_0, w_0}^\cM(z,w):= r(z/n, w/n) \Theta(z,w),
	$
	our assertion now follows from Lemma \ref{lem: alg origin}.
\end{proof}

Since $\Theta_{z_0,w_0}(z,w)$ is a generating function for Eisenstein-Kronecker numbers, 
the above theorem gives the following.

\begin{theorem}[Damerell]
	Let $\Gamma$ be as above.  For any  $z_0$, $w_0 \in \Gamma\otimes\bbQ$ and integers $a>0$, $b \geq 0$,
	we have $$e_{a,b}^*(z_0, w_0)/A^a \in \ol K.$$
\end{theorem}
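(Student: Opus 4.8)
The plan is to read the statement off directly from the generating-function identity \eqref{eq: gen two} together with the algebraicity of the Laurent coefficients of $\Theta_{z_0,w_0}(z,w)$ proved in the preceding theorem, and then to settle the boundary case $b=0$ separately.

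Fix $z_0,w_0\in\Gamma\otimes\bbQ$. By the preceding theorem every coefficient of the Laurent expansion of $\Theta_{z_0,w_0}(z,w)$ at the origin lies in $\ol K$. Comparing this with \eqref{eq: gen two}, for integers $m,n\ge 0$ the coefficient of $z^{n}w^{m}$ is
\[
	(-1)^{m+n}\,\frac{e^*_{m,n+1}(z_0,w_0)}{A^{m}\,m!}\ \in\ \ol K .
\]
Multiplying by the nonzero rational number $(-1)^{m+n}m!$ gives $e^*_{m,n+1}(z_0,w_0)/A^{m}\in\ol K$ for all $m,n\ge 0$; renaming $a=m$ and $b=n+1$, this is precisely $e^*_{a,b}(z_0,w_0)/A^{a}\in\ol K$ for every $a\ge 0$ and every $b\ge 1$, so the theorem is proved whenever $b\ge 1$, in particular in the range $a>0$, $b\ge 1$. (As a consistency check, the two polar terms of \eqref{eq: gen two} contribute only the coefficients $\pair{w_0,z_0}\delta_{z_0}$ and $\delta_{w_0}$, each a root of unity or $0$, hence already in $\ol K$.)

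It remains to handle $b=0$, i.e. to show $e^*_{a,0}(z_0,w_0)/A^{a}=K_a(z_0,w_0,0)/A^{a}\in\ol K$ for $a>0$. This value is not among the Taylor coefficients of $\Theta_{z_0,w_0}$, so it needs an extra ingredient: the functional equation of the Eisenstein-Kronecker-Lerch series relating $K_a(z_0,w_0,s)$ to $K_a(w_0,z_0,a+1-s)$, up to a factor built from a power of $A$, a ratio of $\Gamma$-values, and a root of unity $\pair{\cdot,\cdot}$ (see \cite{W} VIII). Specialising to $s=0$: the accompanying $\Gamma$-factor is $1/\Gamma(0)=0$ while the companion value $K_a(w_0,z_0,a+1)$ is finite, since its defining series converges for $a>0$; this forces $K_a(z_0,w_0,0)$ to collapse, and in fact it vanishes when $z_0\notin\Gamma$. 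When $z_0\in\Gamma$ one first translates the summation variable by a lattice element, extracting from $\pair{\cdot,\cdot}$ a root of unity, to reduce to $K_a(0,w_0,0)$, which is treated the same way. Making the functional equation and its normalizing constant precise, and carrying out this lattice-point reduction, is the only genuine obstacle; everything else is bookkeeping on the coefficients, and I would follow \cite{BK1} for the details.
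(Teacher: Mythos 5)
Your main argument --- extracting the coefficient of $z^{n}w^{m}$ from \eqref{eq: gen two} and invoking the algebraicity of the Laurent expansion of $\Theta_{z_0,w_0}(z,w)$ at the origin --- is exactly the paper's proof, which consists of the single remark that $\Theta_{z_0,w_0}$ is a generating function for the Eisenstein--Kronecker numbers; this part is correct and complete, and it establishes $e^*_{a,b}(z_0,w_0)/A^a\in\ol K$ for all $a\ge 0$, $b\ge 1$. That is in fact the full intended content of the theorem: the printed range ``$a>0$, $b\ge 0$'' is evidently a misprint for ``$a\ge 0$, $b>0$'', since Definition \ref{def: EK} only defines $e^*_{a,b}$ for $b>0$ and this is the range appearing in \eqref{eq: interpolate} and \eqref{eq: hecke}. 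Your second paragraph on $b=0$ therefore treats a case the paper neither defines nor intends. It is not wrong in spirit --- Weil's functional equation in \cite{W} VIII does force $K_a(z_0,w_0,0)=0$ for $a>0$, because the factor $1/\Gamma(s)$ vanishes at $s=0$ while $K_a(w_0,z_0,a+1)$ is given by a convergent series when $a>0$, and Proposition \ref{prop: property K} guarantees there is no pole at $s=0$ for $a>0$ --- but as written it is only a sketch deferring the normalization constants to the literature. Either carry that computation out explicitly or, more simply, note that $b=0$ falls outside Definition \ref{def: EK} and read the theorem with the corrected range, in which case your first paragraph already finishes the proof.
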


%
%
%
\section{$p$-integrality}
%
%
%

In this section, we prove the $p$-integrality of the Laurent expansion of $\Theta_{z_0,w_0}(z,w)$
with respect to the variable of the formal parameter, when the elliptic curve $E$ has good ordinary
reduction at $p$.  We will then use this fact to construct $p$-adic measures interpolating 
Eisenstein-Kronecker numbers.

Let $p \geq 5$ be a prime for which $E$ has good ordinary reduction.  
This condition is equivalent to saying that $(p)$ splits as $(p) = \frp\frp^*$ in $\cO_K$.
We fix a Weierstrass model $\cE$ of $E$ over $\cO_{K}$, given by the equation
 $$
 	\cE: y^2 = 4 x^3 - g_2 x - g_3
 $$
with good reduction at $p$.  We denote by $\wh\cE$ the formal group of $\cE$ with respect to the 
parameter $t= -2x/y$, and we denote by $\lambda(t)$ the formal logarithm of $\wh\cE$.
We also fix an embedding $i_p : \ol\bbQ \hookrightarrow \bbC_p$ such that the completion
of $K$ in $\bbC_p$ is $K_\frp$, and let $W$ be the
ring of integers of the completion of the maximal unramified extension of $\bbQ_p$ as in the introduction.
Then we have the following lemma, due to Bernardi, Goldstein and Stephens.

\begin{lemma}[\cite{BGS} Proposition III.1]\label{lem: BGS}
	We let
	$
		\wh \theta(t) := \theta(z)|_{z= \lambda(t)}
	$
	be the formal composition of the Taylor expansion of $\theta(z)$ at $z=0$ with $\lambda(t)$.
	Then we have
	$$
		t^{-1} \wh\theta(t)  \in W[[t]]^\times.
	$$
\end{lemma}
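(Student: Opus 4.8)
The plan is to reduce the statement to the $p$-integrality of the coefficients of the Taylor expansion of $\wh\theta(t)$, and then to obtain that integrality from the classical multiplication formulas for $\theta$ by an induction on the coefficients. Since $\theta'(0)=1$ and $\lambda(t)=t+O(t^2)$, the composition $\wh\theta(t)=\theta(\lambda(t))$ has the shape $\wh\theta(t)=t\,h(t)$ with $h(0)=1$; hence $t^{-1}\wh\theta(t)$ is a unit in $W[[t]]$ as soon as it lies in $W[[t]]$, so everything reduces to proving $h(t)\in W[[t]]$. By Lemma~\ref{lem: alg origin} and $g_2,g_3\in\cO_K$ we have $\wh\theta(t)\in K[[t]]$, so through $i_p$ the coefficients $c_k$ of $h$ have well-defined $p$-adic valuations (and, $p$ being split, lie in $\bbQ_p$).

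The main input is two families of functional equations for $h$. As noted in the text, $\theta(z)$ differs from the Weierstrass $\sigma$-function only by a factor $e^{cz^2}$, so this factor cancels in the ratios $\psi_n(z):=\theta(nz)/\theta(z)^{n^2}$ and $\theta_\alpha(z):=\theta(z)^{N(\alpha)}/\theta(\alpha z)$, and the classical division formulas apply: for an integer $n\ge 2$ prime to $p$, $\psi_n$ is the $n$-th division polynomial, a polynomial in $x(z)=\wp(z)$, $y(z)=\wp'(z)$ with coefficients in $\cO_K$, with a pole of exact order $n^2-1$ and leading term $n\,z^{1-n^2}$ at the origin; and for $\alpha\in\cO_K\setminus\bbZ$ prime to $p$ and not a unit, $\theta_\alpha$ is, by the proof of Lemma~\ref{lem: alg origin}, a rational function on $E$ over $K$ vanishing to exact order $N(\alpha)-1$ at the origin with leading term $\alpha^{-1}z^{N(\alpha)-1}$, whose poles lie at the nonzero $\alpha$-torsion and hence stay off $\wh\cE$ because $\alpha$ is prime to $p$. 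Composing with $z=\lambda(t)$ and using $\lambda\circ[n]=n\lambda$, $\lambda\circ[\alpha]=\alpha\lambda$, the integrality $x(t),y(t)\in W((t))$ (good reduction at $p$; here $p\ge 5$ makes $2$ a unit), and $[n](t),[\alpha](t)\in W[[t]]$ with derivatives $n,\alpha\in W^\times$ at the origin, one finds $\wh\psi_n(t)=t^{1-n^2}(n+\cdots)$ and $\wh\theta_\alpha(t)=t^{N(\alpha)-1}(\alpha^{-1}+\cdots)$ with all coefficients in $W$. Rearranging the resulting identities $\wh\theta([n](t))=\wh\psi_n(t)\,\wh\theta(t)^{n^2}$ and $\wh\theta([\alpha](t))=\wh\theta(t)^{N(\alpha)}/\wh\theta_\alpha(t)$ and writing $\wh\theta(t)=t\,h(t)$ then gives
\[
 h(t)^{n^2}=G_n(t)\,h\bigl([n](t)\bigr),\qquad h(t)^{N(\alpha)}=G_\alpha(t)\,h\bigl([\alpha](t)\bigr),
\]
with $G_n,G_\alpha\in W[[t]]^\times$ and $G_n(0)=G_\alpha(0)=1$.

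Next I would induct on $k$. Assuming $c_0,\dots,c_{k-1}\in W$ and comparing coefficients of $t^k$ on the two sides of each functional equation --- in which every term except one explicit multiple of $c_k$ already lies in $W$ --- one is left with
\[
 (n^2-n^k)\,c_k\in W\qquad\text{and}\qquad \bigl(N(\alpha)-\alpha^k\bigr)\,c_k\in W.
\]
If $k\not\equiv 2\pmod{p-1}$, then (since $p\ge 5$) some integer $n\ge 2$ prime to $p$ has $n^2\not\equiv n^k\pmod p$, so $n^2-n^k\in W^\times$ and $c_k\in W$. If $k\equiv 2\pmod{p-1}$ (in particular $k=2$), choose $\alpha\in\cO_K\setminus\bbZ$ prime to $p$, not a unit, with $\bar\alpha\not\equiv\alpha\pmod{\frp}$ --- such an $\alpha$ exists because $p$ is split and unramified in $K$ (e.g.\ a generator of a degree-one prime over a rational prime $\ne p$ that is odd and a quadratic non-residue modulo $p$). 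Since $\cO_K/\frp=\bbF_p$ and $\alpha$ is prime to $p$, Fermat gives $\alpha^{k-1}\equiv\alpha\pmod{\frp}$, whence $N(\alpha)-\alpha^k=\alpha\bigl(\bar\alpha-\alpha^{k-1}\bigr)\equiv\alpha(\bar\alpha-\alpha)\not\equiv 0\pmod{\frp}$ is a unit in $W$, so again $c_k\in W$. Hence $h(t)\in W[[t]]^\times$, as claimed.

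The crux is precisely the coefficients $c_k$ with $k\equiv 2\pmod{p-1}$: for these the integer multiplication formulas become vacuous modulo $p$ (Fermat forces $n^k\equiv n^2$), and one is obliged to bring in a $\theta_\alpha$ with $\alpha\notin\bbZ$; this is exactly where the $\cO_K$-structure of $\Gamma$ and the splitting of $p$ are used, paralleling the remark after Lemma~\ref{lem: alg origin}. The remaining care is the verification that $\wh\psi_n$ and $\wh\theta_\alpha$ really lie in $W((t))$ with unit leading coefficients: this uses good reduction at $p$ (so $x(t),y(t)\in W((t))$, and the nonzero $\alpha$-torsion, being étale, reduces injectively and away from the identity), the choices of $n$ and $\alpha$ prime to $p$, and $p\ge 5$.
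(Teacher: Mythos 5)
The paper gives no argument here --- it simply cites \cite{BGS}, Proposition III.1 --- so there is no in-text proof to compare against; what you have written is a correct, self-contained proof along the standard lines of that reference (multiplication formulas plus induction on coefficients), and the structure is sound: the reduction to $h(t)\in W[[t]]$, the integrality of $\wh\psi_n$ and $\wh\theta_\alpha$ via good reduction and the \'etaleness of prime-to-$p$ torsion, the congruences $(n^2-n^k)c_k\in W$ and $(N(\alpha)-\alpha^k)c_k\in W$, and the case split at $k\equiv 2\pmod{p-1}$, which is exactly where the $\cO_K$-action and the splitting of $p$ must enter (one needs $\alpha\not\equiv\ol\alpha\pmod\frp$, and such $\alpha$ prime to $p$ exist since $p$ is unramified). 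One assertion is not literally correct, though it does no damage to the logic: the Gaussian factor $e^{cz^2}$ relating $\theta$ to $\sigma$ cancels in $\theta(nz)/\theta(z)^{n^2}$ but does \emph{not} cancel in $\theta_\alpha(z)=\theta(z)^{N(\alpha)}/\theta(\alpha z)$ for $\alpha\notin\bbZ$ (it contributes $e^{c(N(\alpha)-\alpha^2)z^2}$ with $N(\alpha)\neq\alpha^2$), so ``the classical division formulas apply'' is not the right justification there; the periodicity and $K$-rationality of $\theta_\alpha$ come instead from the transformation law of $\theta$ itself, via $H(\alpha z,\alpha\gamma)=N(\alpha)H(z,\gamma)$ --- which is precisely the content of Lemma \ref{lem: alg origin}, which you also invoke, so the facts you actually use about $\theta_\alpha$ (divisor, leading coefficient $\alpha^{-1}$, rationality over $K$) all stand. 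I would only ask you to tighten that one sentence and to record explicitly that the possible vertical (power-of-$p$) ambiguity in extending a rational function to the integral model is killed by computing the unit leading coefficient, a point you gesture at but should state.
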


Using this lemma, we have the following.

\begin{proposition}
	Let $\wh\Theta(s,t) := \Theta(z,w)|_{z = \lambda(s), w=\lambda(t)}$ be the formal composition of the 
	Laurent expansion of $\Theta(z,w)$ at the origin with $z= \lambda(s)$ and $w= \lambda(t)$.  Then we have
	$$
		\wh\Theta(s,t) - s^{-1} - t^{-1} \in W[[s,t]].
	$$
\end{proposition}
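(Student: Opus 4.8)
The plan is to reduce the statement to the behavior of $\wh\theta(t)$ established in Lemma~\ref{lem: BGS}. Recall that $\Theta(z,w) = \theta(z+w)/\theta(z)\theta(w)$. Substituting $z = \lambda(s)$, $w = \lambda(t)$, and using that the formal logarithm $\lambda$ of $\wh\cE$ satisfies $\lambda(0) = 0$ with $\lambda'(0) = 1$, I would first argue that there is a formal power series $F(s,t) \in W[[s,t]]$ with $F(s,t) \equiv s + t \bmod (s,t)^2$ — essentially the formal group law written in terms of the logarithm composed appropriately, since $\lambda(s) + \lambda(t) = \lambda(F_{\wh\cE}(s,t))$ — so that $\theta(z+w)|_{z=\lambda(s),w=\lambda(t)} = \wh\theta(F(s,t))$. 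The key point is that $F(s,t)$ has no constant term and its linear part is $s+t$, so $F$ is a unit multiple of a "regular parameter" in each variable when the other is inverted; more precisely $F(s,t) = (s+t)\cdot u(s,t)$ is false in general, but $F(s,t)$ does lie in $W[[s,t]]$ and reduces to $s+t$ modulo degree two, which is all that is needed.

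Next I would write
$$
\wh\Theta(s,t) = \frac{\wh\theta(F(s,t))}{\wh\theta(s)\,\wh\theta(t)}
= \frac{F(s,t)^{-1}\wh\theta(F(s,t))}{\bigl(s^{-1}\wh\theta(s)\bigr)\bigl(t^{-1}\wh\theta(t)\bigr)}\cdot \frac{st}{F(s,t)}.
$$
By Lemma~\ref{lem: BGS}, each of $F(s,t)^{-1}\wh\theta(F(s,t))$, $s^{-1}\wh\theta(s)$, and $t^{-1}\wh\theta(t)$ lies in $W[[s,t]]^\times$ (for the first one, one must check that substituting the power series $F(s,t)$ with zero constant term into the one-variable unit $t^{-1}\wh\theta(t) \in W[[t]]^\times$ again gives a unit in $W[[s,t]]$, which holds since the constant term is preserved). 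Hence
$$
\wh\Theta(s,t) = \frac{st}{F(s,t)}\cdot G(s,t), \qquad G(s,t) \in W[[s,t]]^\times.
$$

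The remaining task is to analyze $st/F(s,t)$. Here $F(s,t) \in W[[s,t]]$ with $F(s,t) = s+t + (\text{higher order})$. I would argue that $st/F(s,t)$ has a partial fraction type decomposition: write $F(s,t) = (s+t)\bigl(1 + h(s,t)\bigr)$ is not quite available since $s+t$ is not a unit, so instead I would proceed directly. Since $F$ is, for fixed $t$, a power series in $s$ with $F(0,t) = t$ (a unit in $W((t))$) — wait, more carefully $F(0,t) = t$ which is not a unit in $W[[t]]$ — I would instead use that $st/F(s,t) - s^{-1} - t^{-1}$ can be shown, by clearing denominators, to equal $\bigl(st - (s+t)F(s,t)\bigr)/\bigl(st\,F(s,t)\bigr)$ up to the unit issue; the numerator $st - (s+t)F(s,t)$ vanishes to high enough order to cancel the $st\,F(s,t)$ in the denominator. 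The cleanest route is to combine everything: I would show directly that $\wh\Theta(s,t) - s^{-1} - t^{-1} \in W[[s,t]]$ by multiplying through by $st$, so it suffices to prove $st\,\wh\Theta(s,t) - t - s \in (s,t)\cdot W[[s,t]] \cap st\cdot W[[s,t]]$, i.e. that $st\,\wh\Theta(s,t) = s t\cdot G(s,t)\cdot st/F(s,t)$ is congruent to $s+t$ modulo $(s,t)^2$ in $W[[s,t]]$ and has no negative-degree terms — and the absence of negative-degree terms beyond the explicit $s^{-1}+t^{-1}$ follows from the divisor description of $\Theta(z,w)$, which has simple poles exactly along $z=0$ and $w=0$ with residue one.

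The main obstacle, I expect, is the bookkeeping around $st/F(s,t)$: one must verify both that the only polar terms of $\wh\Theta(s,t)$ are $s^{-1}$ and $t^{-1}$ (each with coefficient exactly $1$, matching the residue statement in the Kronecker theta function example) and that all the remaining coefficients are $p$-adically integral over $W$. The integrality is handed to us by Lemma~\ref{lem: BGS} once the pole terms are isolated; the delicate point is the isolation itself, i.e. checking that $\frac{st}{F(s,t)} - s^{-1} - t^{-1}$ lies in $W[[s,t]]$, which amounts to the identity $st - (s+t)F(s,t) \in F(s,t)\cdot st\cdot W[[s,t]]$ together with the fact that $F(s,t)/(st)$ has the right leading behavior. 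I would handle this by working in $W((s))((t))$ or $W[[s,t]][s^{-1},t^{-1}]$ and tracking orders of vanishing, using that $F(s,t) \equiv s+t$ and $\wh\theta(t) \equiv t$ modulo the square of the maximal ideal, so that all lower-order terms match those of the elementary model $st/(s+t)\cdot \frac{s+t}{st} = \frac{st/(s+t)}{\cdots}$; comparing with the archimedean Laurent expansion \eqref{eq: gen two} (evaluated with $z_0 = w_0 = 0$, where it reads $\Theta(z,w) = z^{-1} + w^{-1} + \sum_{a,b\ge 0}(-1)^{a+b} e^*_{a,b+1}(0,0) z^b w^a / A^a a!$) confirms the shape $s^{-1} + t^{-1} + (\text{holomorphic})$, and then Lemma~\ref{lem: BGS} upgrades "holomorphic" to "in $W[[s,t]]$."
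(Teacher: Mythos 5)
Your overall strategy is the paper's own (its proof is literally one sentence: the statement ``follows from the previous lemma and the definition of $\Theta(z,w)$''), but your central decomposition is inverted, and the ensuing analysis of $st/F(s,t)$ is of the wrong object and would not go through. Writing $\wh\theta(u)=u\,v(u)$ with $v\in W[[u]]^\times$ (Lemma \ref{lem: BGS}) and $F(s,t)$ for the formal group law, one has
$$
\wh\Theta(s,t)=\frac{\wh\theta(F(s,t))}{\wh\theta(s)\,\wh\theta(t)}
=\frac{F(s,t)}{st}\cdot\frac{v(F(s,t))}{v(s)\,v(t)},
$$
whereas your displayed formula carries the factor $st/F(s,t)$: multiplying your ratio of units by $st/F$ yields $\wh\Theta\cdot(st/F)^2$, not $\wh\Theta$. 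The two factors behave completely differently. The correct one satisfies $F(s,t)/(st)=s^{-1}+t^{-1}+G(s,t)$ with $G\in W[[s,t]]$, because $F(s,0)=s$ and $F(0,t)=t$ force $F(s,t)-s-t\in st\,W[[s,t]]$; the one you analyze is not of the form $s^{-1}+t^{-1}+(\text{integral})$ at all (already for $F=s+t$ one gets $st/(s+t)=t-t^2/s+t^3/s^2-\cdots$ in $W((s))[[t]]$), so the identity you are chasing via ``$st-(s+t)F(s,t)$ vanishes to high enough order'' is false.

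With the factorization corrected the argument closes cleanly, and you need neither the divisor of $\Theta$ nor the archimedean expansion \eqref{eq: gen two}. Set $U:=v(F(s,t))/(v(s)v(t))\in W[[s,t]]^\times$ (your parenthetical check that substituting $F$, which has zero constant term, into a unit of $W[[u]]$ gives a unit is fine). Since $v(0)=1$, the same axis argument gives $U(s,0)=U(0,t)=1$, hence $U-1\in st\,W[[s,t]]$, and therefore
$$
\wh\Theta(s,t)-s^{-1}-t^{-1}=\bigl(s^{-1}+t^{-1}\bigr)(U-1)+G\,U\in W[[s,t]],
$$
because $\bigl(s^{-1}+t^{-1}\bigr)(U-1)=\tfrac{s+t}{st}(U-1)\in(s+t)\,W[[s,t]]$. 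This is presumably the computation implicit in the paper's one-line proof; your write-up has the right ingredients but the algebra error derails the key step.
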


\begin{proof}
	The statement follows from  the previous lemma and the definition of $\Theta(z,w)$.
\end{proof}

Again using the method of Mumford, we may prove a $p$-integrality statement for 
$\Theta_{z_0, w_0}(z,w)$.

\begin{theorem}
	Let $z_0$, $w_0 \in \Gamma \otimes \bbQ$ be torsion points whose order $n$ is prime to $p$, and let
	$\wh\Theta_{z_0,w_0}(s,t) := \Theta_{z_0,w_0}(z,w)|_{z = \lambda(s), w=\lambda(t)}$ be the formal 
	composition of the Laurent expansion of $\Theta_{z_0,w_0}(z,w)$ at the origin with $z= \lambda(s)$ and $w= \lambda(t)$.
	Then we have
	$$
		\wh\Theta_{z_0,w_0}^*(s,t) :=  
		\wh\Theta_{z_0,w_0}(s,t)  - \pair{w_0, z_0} \delta_{z_0} s^{-1} - \delta_{w_0} w^{-1} 
		\in W[[s,t]].
	$$
\end{theorem}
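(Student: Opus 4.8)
The plan is to repeat the proof of the algebraicity theorem, now keeping track of $p$-adic valuations and using Lemma~\ref{lem: BGS} (equivalently the Proposition just stated) in place of Lemma~\ref{lem: alg origin}. First I would reduce to Mumford's algebraic theta function: since $z_0,w_0\in\Gamma\otimes\bbQ$ have order $n$ prime to $p$ and $p\geq 5$, the constant $\Exp{(w_0\ol z_0-z_0\ol w_0)/2A}$ relating $\Theta_{z_0,w_0}$ to $\Theta^\cM_{z_0,w_0}$, the value $\pair{w_0,z_0}$, and the coefficients $\delta_{z_0},\delta_{w_0}$ are all roots of unity of order prime to $p$ — in particular units in $W$, with $\pair{w_0,z_0}=1$ when $z_0,w_0\in\Gamma$. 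Writing $\Theta_{z_0,w_0}=\zeta\,\Theta^\cM_{z_0,w_0}$ with $\zeta\in W^\times$, it is enough to establish the statement for $\wh\Theta^\cM_{z_0,w_0}$.

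Next I would exploit $\Theta^\cM_{z_0,w_0}(z,w)=r(z/n,w/n)\,\Theta(z,w)$, where $r$ is the rational function on $E\times E$ produced by Mumford's construction. Because $E$ has good reduction at $\frp$ and $n$ is prime to $p$, the prime-to-$p$ torsion field generated by $z_0,w_0$ is unramified over $K_\frp$, so $r$ is in fact defined over $W$, and $[1/n]$ is an automorphism of $\wh\cE$. The divisor of $r$ is $\mathrm{div}\,\Theta(z+z_0,w+w_0)-\mathrm{div}\,\Theta(z,w)$; translating $\Delta-(E\times\{0\})-(\{0\}\times E)$ by $-(z_0,w_0)$ and using that a prime-to-$p$ torsion point reduces to the origin of $\tilde E$ only if it already equals the origin, one reads off that near the origin of $\wh\cE\times\wh\cE$ the divisor of $\wh r(s,t):=r(z/n,w/n)|_{z=\lambda(s),\,w=\lambda(t)}$ is $(\delta_{z_0+w_0}-1)[s\oplus t=0]+(1-\delta_{z_0})[s=0]+(1-\delta_{w_0})[t=0]$, where $\oplus$ is the formal group law. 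Hence $(s\oplus t)\,\wh r(s,t)$ lies in $W[[s,t]]$ and is $s^{1-\delta_{z_0}}t^{1-\delta_{w_0}}(s\oplus t)^{\delta_{z_0+w_0}}$ times a series of unit content. Writing $\wh\theta(u)=u\cdot(\text{unit of }W[[u]])$ by Lemma~\ref{lem: BGS} and using $\wh\Theta(s,t)=\wh\theta(s\oplus t)/(\wh\theta(s)\wh\theta(t))$, one gets $st\,\wh\Theta(s,t)=(s\oplus t)\,U_0(s,t)$ with $U_0\in W[[s,t]]^\times$; multiplying, $st\,\wh\Theta^\cM_{z_0,w_0}(s,t)=\wh r(s,t)\,(s\oplus t)\,U_0(s,t)$, and therefore $\wh\Theta^\cM_{z_0,w_0}(s,t)=(s\oplus t)^{\delta_{z_0+w_0}}s^{-\delta_{z_0}}t^{-\delta_{w_0}}\,U(s,t)$ for some $U\in W[[s,t]]$.

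It then remains to subtract off the principal part. When $\delta_{z_0}=\delta_{w_0}=0$ the expression already lies in $W[[s,t]]$ (using $s\oplus t\in(s,t)W[[s,t]]$ if $z_0=-w_0$), consistent with the principal part being $0$. When $\delta_{z_0}=1$ and $\delta_{w_0}=0$ (so $\delta_{z_0+w_0}=0$) we have $\wh\Theta^\cM_{z_0,w_0}=s^{-1}U(s,t)$, and $s\cdot\wh\Theta_{z_0,w_0}(s,t)|_{s=0}$ is the residue of $\Theta_{z_0,w_0}(z,w)$ along $z=0$ evaluated at $w=\lambda(t)$, which by \eqref{eq: gen two} is the \emph{constant} $\pair{w_0,z_0}\in W^\times$; hence $\zeta U(0,t)=\pair{w_0,z_0}$, so $\zeta U(s,t)-\pair{w_0,z_0}\in sW[[s,t]]$ and $\wh\Theta_{z_0,w_0}-\pair{w_0,z_0}s^{-1}\in W[[s,t]]$. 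The case $\delta_{w_0}=1,\delta_{z_0}=0$ is symmetric (the residue along $w=0$ being $1$). Finally when $\delta_{z_0}=\delta_{w_0}=1$ one has $\delta_{z_0+w_0}=1$, $\pair{w_0,z_0}=1$, and $(s\oplus t)/st\in s^{-1}+t^{-1}+W[[s,t]]$, so $\wh\Theta^\cM_{z_0,w_0}=U\cdot(s^{-1}+t^{-1}+W[[s,t]])$; the residues of $\Theta_{z_0,w_0}$ along $z=0$ and $w=0$ being $1$ by \eqref{eq: gen two}, this forces $\zeta U(0,t)=\zeta U(s,0)=1$ and hence $\wh\Theta_{z_0,w_0}-s^{-1}-t^{-1}\in W[[s,t]]$, as desired.

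The substantive content — Lemma~\ref{lem: BGS} and Mumford's theory — is imported, so the main work here is the bookkeeping of the middle step: checking that $r$ is indeed defined over $W$ (this is where good \emph{ordinary} reduction enters, via the unramifiedness of the prime-to-$p$ torsion field) and that its divisor does not degenerate modulo $\frp$ — i.e., that the translated components $E\times\{-w_0\}$, $\{-z_0\}\times E$, $\{z+w=-(z_0+w_0)\}$ meet the origin mod $\frp$ only when $z_0,w_0,z_0+w_0$ already lie in $\Gamma$. I expect this divisor/formal-group analysis, together with verifying that the leading coefficients of the resulting units $U$ are the expected roots of unity, to be the delicate point; and that the last part is not an extra difficulty but precisely the place where the explicit principal part of $\Theta_{z_0,w_0}$ from \eqref{eq: gen two}, computed in the earlier sections, is used in an essential (not cosmetic) way.
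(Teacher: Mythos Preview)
Your argument is correct and follows the same route as the paper: reduce to $\Theta^{\cM}_{z_0,w_0}$ via the prime-to-$p$ root of unity, use that $r$ is defined over $W$ to control the formal expansion, combine with the $p$-integrality of $\wh\Theta$ coming from Lemma~\ref{lem: BGS}, and finish by reading off the principal part from \eqref{eq: gen two}. Your divisor bookkeeping (tracking the component along $s\oplus t=0$ and the ensuing case analysis) is more explicit than the paper's sketch, which simply asserts $\wh r\in W[s^{-1},t^{-1}][[s,t]]$ and then $\wh\Theta^{\cM}_{z_0,w_0}\in W[s^{-1},t^{-1}][[s,t]]$ before invoking the known shape of the poles; your version makes transparent the cancellation between the antidiagonal pole of $\wh r$ and the antidiagonal zero of $\wh\Theta$ that underlies that assertion.
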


\begin{proof}
	Since the difference between $\Theta_{z_0,w_0}$ and $\Theta_{z_0,w_0}^\cM$ is
	a root of unity of order prime to $p$, it is sufficient to prove the statement for $\Theta_{z_0,w_0}^\cM$.
	In the construction of $\Theta_{z_0,w_0}^\cM$, we may take $\rho(z,w)$ hence $r(z,w)$ so that it
	is 
	defined over $W$.   Using the fact that the only possible poles of $r(z,w)$ at the formal neighborhood 
	$\wh\cE \times \wh\cE$ of the origin is the origin itself, we can prove that 
	$$
		\wh r(s,t) := r(z,w)|_{z= \lambda(s), w=\lambda(t)} \in W[s^{-1}, t^{-1}][[s,t]].
	$$  
	By substituting into the equality
	$\wh\Theta_{z_0,w_0}^\cM([n]s,[n]t) = \wh r(s, t) \wh\Theta([n]s,[n]t)$
	the inverse power
	series of $[n]s$ and $[n]t$, which also have coefficients in $W$, we have 
	$\wh\Theta_{z_0,w_0}(s,t) \in W[s^{-1}, t^{-1}][[s,t]]$.
	Our assertion now follows from the explicit shape of the poles of $\Theta_{z_0,w_0}(z,w)$.
\end{proof}

\begin{remark}
	In the above theorem, the condition that $p$ is ordinary is crucial in proving the integrality.
	The coefficients of $\wh\Theta_{z_0,w_0}(s,t)$ is $p$-adically unbounded
	when $p$ is supersingular
\end{remark}

We use the above theorem to construct our $p$-adic measure.  It is known that there exists an
isomorphism of formal groups 
$$
	\eta_p : \wh\cE\xrightarrow\cong\wh\bbG_m
$$
over $W$, which is expressed in the form $\eta_p(t) = \exp( \lambda(t)/\Omega_\frp) -1$
for some suitable $p$-adic period $\Omega_\frp \in W^\times$.
We let $\iota(T) = \Omega_\frp T + \cdots$ be the inverse power series of $\eta_p(t)$, and we let
$$
	\wh\Theta_{z_0,w_0}^{*\iota}(S,T) := \wh\Theta_{z_0,w_0}^*(s,t) |_{s = \iota(S), t = \iota(T)}.
$$
Using the standard dictionary between $p$-adic measures on $(\bbZ_p)^2$ and
formal power series in $W[[S,T]]$, we may define the measure $\mu_{z_0,w_0}$ as follows.

\begin{definition}
	Let $z_0$, $w_0 \in \Gamma \otimes \bbQ$ be torsion points of order prime to $p$.
	We define $\mu_{z_0, w_0}$ to be the $p$-adic measure on 
	$\bbZ_p \times \bbZ_p$ characterized by the formula
	$$
		\int_{\bbZ_p^2} (1+S)^x (1+T)^y d \mu_{z_0, w_0}(x,y) = \wh\Theta_{z_0,w_0}^{*\iota}(S,T).	
	$$
 \end{definition}

 When $z_0$, $w_0 \not\in \Gamma$, we have $\wh\Theta^*_{z_0,w_0} = \wh\Theta_{z_0,w_0}$.
 Note that the differential $\partial_{\log, S} := (1+S) \partial_S$ corresponds to the differential $\Omega_{\frp}^{-1} \partial_z$
 through the equality $z = \lambda \circ \iota(S)$, and similarly for $\partial_{\log,T}$.
 Since $\Theta_{z_0,w_0}(z,w)$ is a generating function of Eisenstein-Kronecker numbers, the measure 
 $\mu_{z_0,w_0}$ in this case satisfies the interpolation property
 \begin{equation}\label{eq: interpolate}
 	\frac{1}{\Omega_\frp^{a+b-1}}
	\int_{\bbZ_p^2} x^{b-1} y^a d\mu_{z_0,w_0}(x,y) =
	(-1)^{a+b-1} (b-1)! \frac{e^*_{a,b}(z_0,w_0)}{A^a}
 \end{equation}
 for integers $a \geq 0$ and $b>0$.  When $z_0$ or $w_0 \in \Gamma$,  we cannot calculate directly
 the interpolation property of $\mu_{z_0,w_0}$.  We calculate instead the interpolation property
 of the restriction of $\mu_{z_0,w_0}$ to $(\bbZ_p^\times)^2$ (See \cite{BK1} Proposition 3.6 for details.)
 
 \begin{remark}
 	In \cite{BK2}, extending previous results of Boxall \cite{Box2} and Schneider-Teitelbaum \cite{ST},
	we develop a theory associating $p$-adic distributions to two-variable power series which are not necessarily
	$p$-integral,  enabling our construction also in the supersingular case.
 \end{remark}

 We use the measure $\mu_{z_0,w_0}$ to construct the two-variable $p$-adic $L$-function of Yager \cite{Yag}, interpolating
 special values of Hecke $L$-functions.  We first review the relation between Eisenstein-Kronecker numbers and
 Hecke $L$-functions.
 
 Let $\varphi$ be a Hecke character of $K$ of infinity type $(1,0)$ with values in $K$.
 Assume in addition that the conductor $\frf$ of $\varphi$ is prime to $p$,
 and assume $w_\frf =1$, where  $w_\frf$ is the number of roots of unity in $K$ congruent to one modulo $\frf$.
 Then there exists a CM elliptic curve $E$ over $K$ whose Gr\"ossencharakter is $\varphi$.  We fix a 
 Weierstrass model of $E$ over $\cO_K$ with good reduction at $p$,  and let $\Gamma$ be its period lattice. 
Let $\Omega$ be a complex number such that $\Gamma = \Omega \frf$.
The Hecke $L$-function $L_\frf(\ol\varphi^a,s)$ for $a \geq 0$
may be expressed in terms of Eisenstein-Kronecker-Lerch series as follows.
\begin{multline*}
	L_\frf(\ol\varphi^a, s) = \sum_{\fra} \frac{\ol\varphi^a(\fra)}{N(\fra)^s}  
	= \sum_{\alpha \in\cO_K} \frac{\ol\varphi^a(\alpha)}{|\alpha|^{2s}}
	=\sum_{\alpha \in(\cO_K/\frf)^\times} K_{a}( \varphi(\alpha), 0,  s; \frf) \\
	= \frac{|\Omega|^{2s}}{\ol\Omega^a} \sum_{\alpha \in(\cO_K/\frf)^\times} K_{a}( \varphi(\alpha) \Omega, 0,  s).
\end{multline*}
Since $A:= A(\Gamma) = N(\frf) \Omega \ol \Omega A(\cO_K) = N(\frf) \Omega\ol\Omega(\sqrt{d_K}/2\pi)$,
where $-d_K$ is the discriminant of $K$, the above equality gives
\begin{equation}\label{eq: hecke}
	\left[ \frac{2 \pi}{\sqrt{d_K}} \right]^a \frac{L_\frf(\ol\varphi^{a+b}, b)}{\Omega^{a+b}}
	= N(\frf)^a \sum_{\alpha \in (\cO_K/\frf)^\times} \frac{e^*_{a,b}(\varphi(\alpha) \Omega, 0)}{A^a}.
\end{equation}
Hence the special values of Hecke $L$-functions may be expressed in terms of Eisenstein-Kronecker numbers.
We now define the measure $\mu_\varphi$ as follows.
 
 \begin{definition}
 	We define the measure $\mu_\varphi$ on $\bbZ_p \times \bbZ_p$ as
	$$
		\mu_\varphi(x,y) =  \sum_{\alpha \in (\cO_K/\frf)^\times} \mu_{\varphi(\alpha)\Omega,0}(x, N(\frf) y).
	$$
 \end{definition}
 
 Then this measure satisfies the interpolation property of Yager.
 
 \begin{proposition}
 	For any integer $a \geq 0$ and $b>0$, we have
	\begin{multline*}
		\frac{1}{\Omega_\frp^{a+b}} \int_{(\bbZ_p^\times)^2} x^{b-1} y^a d \mu_\varphi(x,y) \\
		=(-1)^{a+b-1}(b-1)! \left[ \frac{2 \pi}{\sqrt{d_K}}\right]^a
		\left( 1 - \frac{\varphi(\frp)^{a+b}}{N\frp^{a+1}}\right)
		\left( 1 - \frac{\ol\varphi(\ol\frp)^{a+b}}{N\ol\frp^{b}}\right)
		\frac{L_\frf(\ol\varphi^{a+b},b)}{\Omega^{a+b}}.
	\end{multline*} 
 \end{proposition}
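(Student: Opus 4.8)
The plan is to reduce the statement to the interpolation property of the building-block measures $\mu_{z_0,w_0}$ and then to recognize the resulting sum of Eisenstein--Kronecker numbers as a Hecke $L$-value via \eqref{eq: hecke}. First I would unwind the definition of $\mu_\varphi$: integrating $x^{b-1}y^{a}$ over $(\bbZ_p^\times)^2$ and substituting $y\mapsto N(\frf)y$, which is a bijection of $\bbZ_p^\times$ because $N(\frf)$ is prime to $p$, rewrites the left-hand side --- up to the same power of $N(\frf)$ that appears in \eqref{eq: hecke} --- as $\Omega_\frp^{-(a+b)}\sum_{\alpha\in(\cO_K/\frf)^\times}\int_{(\bbZ_p^\times)^2}x^{b-1}y^{a}\,d\mu_{\varphi(\alpha)\Omega,0}(x,y)$. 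Note that for each summand $w_0=0$ lies in $\Gamma$, so one cannot apply \eqref{eq: interpolate} directly; this is precisely why the statement integrates over $(\bbZ_p^\times)^2$ rather than over $\bbZ_p^2$, and one must instead use the interpolation property of the restriction of $\mu_{z_0,w_0}$ to $(\bbZ_p^\times)^2$ proved in \cite{BK1} (Proposition 3.6).

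Next I would invoke that restricted interpolation formula. The restriction to units is computed by the inclusion--exclusion $\int_{(\bbZ_p^\times)^2}=\int_{\bbZ_p^2}-\int_{p\bbZ_p\times\bbZ_p}-\int_{\bbZ_p\times p\bbZ_p}+\int_{p\bbZ_p\times p\bbZ_p}$, and each ``$p\bbZ_p$'' contribution is evaluated using the distribution (norm) relation satisfied by $\Theta_{z_0,w_0}(z,w)$ under multiplication by $p$ in the corresponding variable --- which, over the formal groups $\wh\cE\cong\wh\bbG_m$, factors through the two primes $\frp$ and $\frp^{*}$ above $p$, only one factor per variable being a genuine (degree $p$) isogeny of the formal group, the other an automorphism. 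After summing over $\alpha$ and using that $\frp$ and $\frp^{*}$ are prime to $\frf$ and hence permute $(\cO_K/\frf)^\times$, I expect these three contributions to assemble into the two Euler factors $\bigl(1-\varphi(\frp)^{a+b}N\frp^{-(a+1)}\bigr)$ and $\bigl(1-\ol\varphi(\ol\frp)^{a+b}N\ol\frp^{-b}\bigr)$ multiplying $\sum_{\alpha}e^*_{a,b}(\varphi(\alpha)\Omega,0)/A^{a}$; the asymmetry of the exponents $a+1$ versus $b$ reflects the different weights $x^{b-1}$ and $y^{a}$ carried by the two dual variables relative to the single $p$-adic period $\Omega_\frp$.

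Finally I would substitute \eqref{eq: hecke}, which turns $N(\frf)^{a}\sum_{\alpha}e^*_{a,b}(\varphi(\alpha)\Omega,0)/A^{a}$ into $\bigl[2\pi/\sqrt{d_K}\bigr]^{a}L_\frf(\ol\varphi^{a+b},b)/\Omega^{a+b}$, and collect the elementary constant $(-1)^{a+b-1}(b-1)!$ coming out of the moment $\int x^{b-1}y^{a}\,d\mu_{z_0,w_0}$ together with the bookkeeping of complex and $p$-adic periods, using the standard comparison relating $\Omega_\frp$ to the Frobenius eigenvalue $\varphi(\frp)$ at $\frp$.

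The hard part is the second step: checking that the inclusion--exclusion over $(\bbZ_p^\times)^2$, combined with the precise shape of the distribution relations for $\Theta_{z_0,w_0}$ and their interaction with the $p$-adic period, really produces the two Euler factors with exactly the exponents $a+1$ and $b$ --- and the correct overall power of $\Omega_\frp$ --- while simultaneously handling with care the polar term $t^{-1}$ that was removed in passing from $\Theta_{z_0,0}$ to $\wh\Theta^{*}_{z_0,0}$ because $w_0=0\in\Gamma$. Pinning down these $p$-stabilization factors and normalizations exactly, rather than merely up to an unspecified unit, is the delicate point; it is carried out in \cite{BK1}.
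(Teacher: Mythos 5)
Your proposal takes essentially the same route as the paper's own proof, which likewise reduces the statement to the interpolation property of $\mu_{z_0,w_0}$, identifies the restriction to $(\bbZ_p^\times)^2$ (computed via translations by $p$-th roots of unity and a distribution formula --- the same computation as your inclusion--exclusion over the cosets of $p\bbZ_p$) as the crucial step, and defers the precise bookkeeping of the Euler factors to \cite{BK1}. You also correctly flag the key subtlety that $w_0=0\in\Gamma$ prevents a direct application of \eqref{eq: interpolate} and forces the use of the restricted interpolation property of \cite{BK1} Proposition 3.6, exactly as the paper notes.
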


\begin{proof}
	This is  \cite{BK1} Proposition 3.8, and follows essentially from the interpolation property \eqref{eq: interpolate}  
	of $\mu_{z_0,w_0}$.  The crucial step in the proof is the explicit calculation
	of the restriction of the measure on $\bbZ_p^2$ to $(\bbZ_p^\times)^2$.  This is done 
	by calculating the translations by $p$-th roots of unity of the power series $\wh\Theta^{*\iota}_{z_0,w_0}(s,t)$
	used in defining the measure, and then applying a certain distribution formula.  See \cite{BK1} \S 2.4 for details.
\end{proof}
  
The measure $\mu_{z_0,w_0}$ may also be used to construct the two-variable $p$-adic $L$-function of Manin-Vishik
\cite{MV} and Katz \cite{Ka}. See \cite{BK1} \S 3.3 for details.
  


\end{document}